\theoremstyle{plain}
\newtheorem{thm}{Theorem}[section]
\newtheorem{lemma}[thm]{Lemma}
\newtheorem{proposition}[thm]{Proposition}
\newtheorem{corollary}[thm]{Corollary}
\newtheorem{conjecture}[thm]{Conjecture}
\theoremstyle{definition}
\newtheorem{definition}[thm]{Definition}
\newtheorem{remark}[thm]{Remark}
\newtheorem{example}[thm]{Example}
\newcommand\N{\mathbb N}
\newcommand\f{\varphi}
\renewcommand\P{\mathbb P}
\newcommand\newop[2]{\def#1{\mathop{\rm #2}\nolimits}}
\newop\Bs{Bs}
\newop\edim{expdim}
\newop\vdim{vdim}
\newcommand\p{\mathbb{P}}
\begin{document}

\author{\L ucja Farnik}
\address{Department of Mathematics, Pedagogical University of Cracow,
   Podchor\c a\.zych 2,
   PL-30-084 Krak\'ow, Poland, \href{http://orcid.org/0000-0003-4300-0206}{ORCiD: 0000-0003-4300-0206}}
\email{Lucja.Farnik@gmail.com}

\author{Francesco Galuppi}
\address{Max Planck Institute for Mathematics in the Sciences, Inselstra\ss{}e 22, 04103 Leipzig, Germany, \href{http://orcid.org/0000-0001-5630-5389}{ORCiD: 0000-0001-5630-5389}}
\email{galuppi@mis.mpg.de}

\author{Luca Sodomaco}
\address{Department of Mathematics, University of Florence, Viale Morgagni, 67/a, 50134 Florence, Italy, \href{http://orcid.org/0000-0002-0472-6357}{ORCiD: 0000-0002-0472-6357}}
\email{luca.sodomaco@unifi.it}

\author{William Trok}
\address{Department of Mathematics, University of Kentucky,  715 Patterson  Office Tower,  Lexington, KY 40506-0027, USA}
\email{william.trok@uky.edu}

\subjclass[2010]{14N20, 14C20, 14N05}
\keywords{SHGH Conjecture; unexpected curves; unexpected quartic; de Jonqui\`{e}res transformations}

\title{On the unique unexpected quartic in $\mathbb{P}^2$}

\begin{abstract}
The computation of the dimension of linear systems of plane curves through a bunch of given multiple points is one of the most classic  
issues in Algebraic Geometry. In general, it is still an open problem to understand when the points fail to impose independent conditions. Despite many partial results, a complete solution is not known, even if the fixed points are in general position. The answer in the case of general points in
the projective plane is predicted by the famous Segre-Harbourne-Gimigliano-Hirschowitz
conjecture.

When we consider fixed points in special position, even more interesting situations may occur. Recently Di Gennaro, Ilardi and Vall\`{e}s discovered a special configuration $Z$ of nine points with a remarkable property: a general triple point always fails to impose independent conditions on the ideal of $Z$ in degree four. The peculiar structure and properties of this kind of \textit{unexpected curves} were studied by Cook II, Harbourne, Migliore and Nagel. 

By using both explicit geometric constructions and more abstract algebraic arguments, we classify low degree unexpected curves. In particular, we prove that the aforementioned configuration $Z$ is the unique one giving rise to an unexpected quartic.
\end{abstract}

\maketitle

\let\thefootnote\relax\footnote{The research of \L ucja Farnik was partially supported by the National Science Centre, Poland, grant 2018/28/C/ST1/00339. Luca Sodomaco is a member of INDAM-GNSAGA.}

\thispagestyle{empty}

\section{Introduction}\label{introduction}
One of the central problems in Algebraic Geometry is the study of linear systems of hypersurfaces of $\p^n$ with imposed singularities, namely divisors containing a set of given points $P_1,\dots,P_r$ with multiplicities $m_1,\dots,m_r$. Interpolation Theory addresses the problem of computing the dimension of such systems. It is actually an open problem to understand when the points fail to impose independent conditions --- naively counting parameters does not always give the correct result.

Let us start with the basic definitions in the projective plane. We work over the field of complex numbers $\mathbb{C}$.

\begin{definition}
Given $P_1,\dots,P_r\in\P^2$ and their ideals $I_1,\dots, I_r\subset \mathbb{C}[x,y,z]$,
we define the \emph{fat point subscheme} of $\P^2$ supported at $P_1,\dots,P_r$ with multiplicities $m_1,\dots, m_r$ to be the scheme $X=m_1P_1+\ldots +m_rP_r$ associated to the ideal
\[
I(X)=I_1^{m_1}\cap\ldots\cap I_r^{m_r}.
\]
We will indicate by $I(X)_d$ the homogeneous component of degree $d$ of $I(X)$. The vector space $I(X)_d$ is the linear system of curves of degree $d$ in $\P^2$ containing $X$, that is, having multiplicity at least $m_i$ at $P_i$ for all $i\in\{1,\ldots, r\}$.
	
The \emph{virtual dimension} of such a system is
\begin{equation}
	\vdim I(X)_d=\binom{d+2}{2}-\sum_{i=1}^r\binom{m_i+1}{2},
\end{equation} while its \emph{expected dimension} is
\begin{equation}\label{def: expected dimension}
\edim I(X)_d = \max\left\{\vdim I(X)_d, 0\right\}.
\end{equation}
In general $\dim I(X)_d\geq\edim I(X)_d$. If either the conditions given by $X$ are independent or $\dim I(X)_d=0$, then $\dim I(X)_d= \edim  I(X)_d$ and the system is called \emph{nonspecial}. Otherwise it is called \emph{special}.
\end{definition}

Classifying the special linear systems is a very hard task, even if the base points are in general position. A conjectural answer comes from the celebrated Segre-Harbourne-Gimigliano-Hirschowitz conjecture (see \cite{Se-SHGH}, \cite{Ha-SHGH}, \cite{G-SHGH}, \cite{Hi-SHGH}).

\begin{conjecture}[SHGH Conjecture]\label{conj: SHGH}
Let $X=m_1P_1+\ldots +m_rP_r$ be a fat point scheme. Assume that $P_1,\dots,P_r$ are in general position. If the linear system $I(X)_d$ is special, then its general element is non-reduced, namely the linear system has some multiple fixed component.
\end{conjecture}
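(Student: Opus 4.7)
The SHGH conjecture has resisted proof for nearly a century, so any realistic proposal must be aspirational and model itself on the strategies that have settled partial cases (bounded multiplicities, small numbers of points, specific degree ranges). The plan is to combine \emph{Cremona reduction} with a \emph{degeneration/specialization} argument of Horace-Alexander-Hirschowitz type, inducting on some complexity invariant such as $d + \sum m_i$.

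First I would exploit the action of quadratic Cremona transformations centered at triples of base points. Such a transformation sends the numerical datum $(d;m_1,\dots,m_r)$ to $(2d-m_1-m_2-m_3; d-m_2-m_3, d-m_1-m_3, d-m_1-m_2, m_4, \dots, m_r)$, preserving both $\dim I(X)_d$ and $\edim I(X)_d$, hence speciality. Iterating reduces the system to a \emph{standard} one in which $d \geq m_i + m_j + m_k$ for every triple, which confines the analysis to Cremona representatives where the virtual dimension is as small as possible.

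Next I would deploy the Horace method: specialize a chosen subset of the points onto a fixed line $\ell$ and analyze $I(X)_d$ via the exact sequence
\[
0 \to I(X')_{d-1} \to I(X)_d \to T_\ell(X)_d \to 0,
\]
where $T_\ell(X)_d$ is the trace on $\ell$ and $X'$ the residual scheme. Semicontinuity transfers any dimension bound from the special fiber to a general one, and induction on $d$ controls both the trace and the residual. On a standard system with no multiple fixed component the target is to show that both pieces impose independent conditions, whence $\dim I(X)_d = \edim I(X)_d$.

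The main obstacle --- and the reason the conjecture is still open --- is the fine geometry of $(-1)$-curves on the blow-up of $\mathbb{P}^2$ at $P_1,\dots,P_r$. SHGH predicts that every source of speciality is accounted for by such curves appearing as multiple fixed components, but certifying this requires bounding effective divisors of very negative self-intersection, a question deeply entangled with the Nagata conjecture on Seshadri constants. I do not expect the strategy above to close that gap without a genuinely new ingredient; an honest proposal would therefore aim at establishing SHGH in a new numerical range rather than in full generality, and the remainder of the present paper takes the complementary route of studying configurations in \emph{special} position, where new phenomena (like the unexpected quartic of Di Gennaro-Ilardi-Vall\`{e}s) become the central object.
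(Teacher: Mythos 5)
The statement you were asked about is the Segre--Harbourne--Gimigliano--Hirschowitz conjecture, which the paper does not prove and which remains open; it is stated only as background motivating the study of unexpected curves. So there is no ``paper's own proof'' to compare against, and your proposal is rightly not a proof either: it is a survey of the two standard attack strategies (Cremona reduction to standard form, and the Horace degeneration method), together with an accurate identification of where they break down. Your honest framing --- that the remainder of the paper deliberately takes the complementary route of special position, where the failure of independence becomes the object of study rather than the obstacle --- matches the actual logical role the conjecture plays in this paper.

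Two substantive remarks on the sketch itself, since it would not close the gap even in principle as written. First, the restriction sequence you display is only left exact in general: the surjectivity of $I(X)_d \to T_\ell(X)_d$ is precisely what the Horace method must establish case by case, typically by arranging the specialized trace to impose independent conditions on $\ell$; writing it as a short exact sequence assumes the conclusion of the inductive step. Second, and more fundamentally, the induction has no mechanism for certifying that \emph{every} special standard system acquires a multiple fixed component; SHGH predicts speciality is governed by $(-1)$-curves on the blow-up, and bounding which effective classes of negative self-intersection can occur is the genuinely open problem, entangled (as you note) with Nagata-type questions. Your proposal correctly names this as the missing ingredient, so the appropriate verdict is that the sketch is a sound map of known partial techniques, not a proof, and no blind proof attempt of this statement could honestly be more than that.
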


Mathematicians have been working on interpolation problems for over a century. A nice survey of the known results and the techniques applied to get them is \cite{Ci}.

In \cite{CHMN} Cook II, Harbourne, Migliore and Nagel focused on a subtler problem about special linear systems of plane curves. Namely, they drop the hypothesis of generality of some of the points, and they propose a classification problem analogous to the SHGH Conjecture (see \cite[Problem 1.4]{CHMN}), although this problem seems too difficult to be solved in full generality. In the same spirit, we focus on a simplified version, and we consider degree $d$ curves containing a general point of multiplicity $d-1$ and a bunch of (not necessarily general) simple points.

\begin{definition}\label{def: unex curve}
Let $d\in\N$. We say that a finite set of distinct points $Z\subset\P^2$ \emph{admits an unexpected curve of degree $d$} if
\begin{equation}\label{eq: unex curve}
\dim I(Z+(d-1)P)_d>\max\left\{\dim I(Z)_d -\binom{d}{2}, 0\right\}
\end{equation}
for a general $P\in\P^2$.
\end{definition}

We want to stress that in the definition of an unexpected curve we do not take into account the number of conditions that $Z$ imposes on curves of degree $d$. Compare inequality (\ref{eq: unex curve}) with equation (\ref{def: expected dimension}).

Recently, unexpected curves and hypersurfaces have been intensively studied.
In the paper \cite{DiMMO} Di~Marca, Malara and Oneto present a way to produce families of unexpected curves using supersolvable arrangements of lines. In \cite{BMSzSz} Bauer, Malara, Szemberg and Szpond consider the existence of special linear systems in $\mathbb{P}^3$ and exhibit there a quartic surface with unexpected postulation properties. In \cite{HMNT} Harbourne, Migliore, Nagel and Teitler construct new examples both in the projective plane and in higher dimensional projective spaces. Moreover they introduce two new methods for constructing unexpected hypersurfaces.

One of the purposes of this paper is to classify all unexpected plane curves in low degrees. By \cite[Theorem 1.2]{CHMN} and \cite[Corollary 6.8]{CHMN} unexpected conics cannot exist. For $d=3$ we recover the following result of Akesseh \cite{Sol}.

\begin{thm}\label{thm:nounexpectedcubics}
No set of points $Z\subset\P^2$ admits an unexpected cubic over $\mathbb{C}$.
\end{thm}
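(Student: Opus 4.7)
The plan is to argue by contradiction. Assume some finite set $Z\subset \P^2$ admits an unexpected cubic, and set $k=\dim I(Z)_3$. The hypothesis $\dim I(Z+2P)_3>\max\{k-3,0\}$ for general $P$ translates into a rank-drop condition on the Jacobian--evaluation map
\[
\phi_P\colon I(Z)_3\longrightarrow \mathbb{C}^3,\qquad C\longmapsto(C_x(P),C_y(P),C_z(P)),
\]
whose kernel is $I(Z+2P)_3$: namely, $\operatorname{rank}\phi_P<\min\{k,3\}$ for a general $P\in \P^2$. The idea is to split into two cases according to $k$.

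For $k\leq 3$, the unexpected assumption forces the incidence variety
\[
\mathcal I=\bigl\{(C,P)\in|I(Z)_3|\times\P^2: P\in\operatorname{Sing}(C)\bigr\}
\]
to project dominantly onto $\P^2$, so $\dim\mathcal I\geq 2$. Combined with $\dim|I(Z)_3|\leq 2$ and the fact that fibers of $\mathcal I\to|I(Z)_3|$ are $0$-dimensional unless the cubic is non-reduced, a dimension count forces every cubic in $|I(Z)_3|$ to have $1$-dimensional singular locus, hence to be of the form $L^2 M$. Any linear system of such cubics must share a common double line $L$ (using that the Veronese surface $\nu_2(\P^2)\subset\P^5$ contains no lines), but then all singularities lie on $L$, contradicting the dominance of $\mathcal I\to\P^2$.

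For $k\geq 4$, the rank drop $\operatorname{rank}\phi_P\leq 2$ at general $P$ yields, after clearing denominators, homogeneous polynomials $\alpha,\beta,\gamma\in\mathbb{C}[x,y,z]_e$ of a common degree $e\geq 0$, without a common factor, such that the derivation
\[
\xi:=\alpha\,\partial_x+\beta\,\partial_y+\gamma\,\partial_z
\]
satisfies $\alpha C_x+\beta C_y+\gamma C_z\equiv 0$ for every $C\in I(Z)_3$. Euler's identity $xC_x+yC_y+zC_z=3C$ prevents $(\alpha,\beta,\gamma)$ from being proportional to $(x,y,z)$. Since $\xi$ is a nonzero derivation of $\mathbb{C}[x,y,z]$, its ring of invariants $\mathbb{C}[x,y,z]^\xi$ has transcendence degree at most $2$, which bounds
\[
\dim\bigl(\ker\xi\cap\mathbb{C}[x,y,z]_3\bigr)\leq 4,
\]
with equality only when $\mathbb{C}[x,y,z]^\xi$ is generated by two linear forms $u,v$. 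In that case $I(Z)_3\subseteq\mathbb{C}[u,v]_3$, whose base locus is the single point $\{u=v=0\}$; then $Z\subseteq\{\text{pt}\}$ forces $\dim I(Z)_3\in\{9,10\}$, contradicting $k=4$.

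The principal obstacle is the passage from the pointwise rank drop of $\phi_P$ to the global polynomial identity $\alpha C_x+\beta C_y+\gamma C_z\equiv 0$: one must control the degree $e$ and the base behaviour of the rational direction field $P\mapsto(\alpha_P:\beta_P:\gamma_P)$ induced by the cokernel of $\phi_P$, which in turn requires delicate information about how $Z$ sits inside $\P^2$.
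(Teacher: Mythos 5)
The fatal flaw is in your case $k\le 3$, and it sits precisely where the whole content of the theorem lives. Your dimension count does not do what you claim: if $k=3$, then $\dim|I(Z)_3|=2$, and with $0$-dimensional fibers the incidence variety $\mathcal I$ can still have dimension $2$, which is exactly enough to dominate $\P^2$. So nothing forces the members of the net to have $1$-dimensional singular locus; a net of \emph{reduced} singular cubics whose singular points sweep out the plane is perfectly consistent with your count. This is not a repairable slip in bookkeeping: your argument in this case uses no property of $\mathbb{C}$ whatsoever, yet in characteristic $2$ the dual Fano configuration of seven points gives precisely this situation --- $\dim I(Z)_3=3$, no $4$ points collinear (so no member of the net is of the form $L^2M$), and a reduced member singular at every general point $P$. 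Since that example satisfies every hypothesis you invoke in the case $k\le 3$, no characteristic-free dimension count can close it. Over $\mathbb{C}$, the rank-drop condition for $k=3$ says the Jacobian determinant of a basis of $I(Z)_3$ vanishes identically, and it is the characteristic-zero Jacobian criterion (identically vanishing Jacobian implies algebraic dependence, false in characteristic $2$) that would let one conclude the net is composed with a pencil and derive a contradiction; none of this appears in your proposal. The paper instead handles this case by reducing to $|Z|=7$ and using a degeneration/B\'ezout argument (its Proposition on a $d$-rich plus a simple line) to force the configuration to be the Fano plane, which does not exist over $\mathbb{C}$ --- either way, some genuinely field-dependent input is mandatory here, and yours has none.

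The case $k\ge 4$ is in better shape, but you have misidentified where the work is. The step you flag as the principal obstacle --- passing from pointwise rank drop to a global derivation $\xi$ with $\xi C\equiv 0$ for all $C\in I(Z)_3$ --- is routine: over the dense open set where $\operatorname{rank}\phi_P$ is maximal, the cokernel direction varies rationally, and clearing denominators and common factors gives $(\alpha,\beta,\gamma)$ with the identity holding on a dense set, hence identically. The genuinely unproven assertion is the bound $\dim\bigl(\ker\xi\cap\mathbb{C}[x,y,z]_3\bigr)\le 4$ with equality only for two linear invariants. This is true but requires an argument, e.g.: four independent cubic first integrals define a rational map to $\P^3$ whose image is a curve (the field of rational first integrals of a derivation not proportional to the Euler field has transcendence degree at most $1$), so by Bertini the linear system they span is composed with a pencil of some degree $m$ with a fixed part $G$, and $\deg G+3m=3$ with projective dimension $3$ forces $m=1$ and $G$ constant, i.e.\ a pencil of lines through a point; merely citing the transcendence degree of the invariant ring, as you do, does not rule out intermediate configurations such as $\ker\xi\supseteq\mathbb{C}[\ell,q]$ with $q$ a conic. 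With that lemma supplied, your conclusion for $k\ge 4$ goes through, and it is a genuinely different (and arguably more conceptual) route than the paper's combinatorial one --- but as written the proposal proves the theorem in neither case.
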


Things become more complicated for $d=4$. In this case there exists a configuration of nine points in $\P^2$ which admits an unexpected quartic. It was observed by Di Gennaro, Ilardi and Vall{\`e}s in \cite[Proposition 7.3]{DiGIV} and is discussed in \cite[Example 6.14]{CHMN} and by Harbourne in {\cite[Example 4.1.10]{Ha}}.

\begin{example}[An unexpected quartic]\label{ex:UnexpectedQuartic}
Let $Z_1,Z_2,Z_3,Z_4\in\p^2$ be four general points. The lines joining any two of them determine three intersection points $Z_5,Z_6,Z_7$. Take a line through any two of the points $Z_5$, $Z_6$ and $Z_7$ (in Figure \ref{fig: Brian's config}, through $Z_6$ and $Z_7$). Call $Z_8$ and $Z_9$ the two intersection points with the previous lines and define $Z=\{Z_1,\dots,Z_9\}$.

Note that the construction of the points $Z_8$ and $Z_9$ depends on the choice of two points among $Z_5$, $Z_6$ and $Z_7$. Nevertheless, the three possible choices provide three projectively equivalent configurations of nine points. For instance, choosing the pair $(Z_5,Z_7)$ 
is the same as choosing $(Z_6,Z_7)$ and then considering the projective linear transformation swapping $Z_1$ and $Z_4$ and fixing $Z_2$ and $Z_3$. Therefore, we conclude that the configuration $Z$ is unique up to projective equivalence.
\end{example}
 
\begin{figure}[ht]\label{fig: Brian's config}
\centering
\begin{tikzpicture}[line cap=round,line join=round,>=triangle 45,x=1.0cm,y=1.0cm,scale=0.45]
\clip(-7,-7) rectangle (7,7);
\draw [line width=0.3pt,domain=-8.:2.] plot(\x,{(-0.--1.*\x)/1.});
\draw [line width=0.3pt,domain=-3.:2.5] plot(\x,{(-0.--2.*\x)/-2.});
\draw [line width=0.3pt,domain=-8.:8.] plot(\x,{(-12.-4.*\x)/-2.});
\draw [line width=0.3pt,domain=-1.:1.78] plot(\x,{(-6.--5.*\x)/-1.});
\draw [line width=0.3pt,domain=-3.3:5.] plot(\x,{(-4.--1.*\x)/-3.});
\draw [line width=0.3pt,domain=-8.:4.9] plot(\x,{(--18.-4.5*\x)/-7.5});
\draw [line width=0.3pt] (0.,-3.9) -- (0.,8.);
\draw [line width=1.2pt,] (0.005557527706670098,1.332101727856274)-- (0.007934814585178657,1.3331168080070637)-- (0.01,1.334)-- (0.012200674579804852,1.3349257486377006)-- (0.04435661922284132,1.3486142993799834)-- (0.07683655323255811,1.361033891769418)-- (0.11159778170207525,1.372822480568722)-- (0.14368380476106135,1.3824803982100569)-- (0.1663342151883824,1.388877842376568)-- (0.18579825381076956,1.3936697503622943)-- (0.20266873398728938,1.3977453422058248)-- (0.22499803717845537,1.4022655440686491);
\draw [line width=1.2pt,] (0.22499803717845537,1.4022655440686491)-- (0.22778920007735112,1.4029324590975905)-- (0.23102497299553998,1.4035746735699042)-- (0.23566867764149155,1.4044144924952526)-- (0.24134980566579264,1.405303712533841)-- (0.24824126096483617,1.4065634409218413)-- (0.2552809196036441,1.4076255648568219)-- (0.2621229737894328,1.4087370899050573)-- (0.2684710168426736,1.4097251121701555)-- (0.2744238509898768,1.4106884338786563)-- (0.281167102949156,1.4116270550304997)-- (0.2867988298602023,1.412516275069088)-- (0.29285046623391436,1.4131831900980294)-- (0.2975929731063744,1.4137760034570883)-- (0.30463263174518235,1.4145911218257945)-- (0.3100420536465821,1.4151839351848683)-- (0.318,1.416)-- (0.3256281048784691,1.4166906691391432)-- (0.33434740136793994,1.41711057860181)-- (0.3424738844983533,1.4178268947440062)-- (0.34941474091065206,1.41832090587657)-- (0.3568990095687531,1.418592611999472)-- (0.36710033945586773,1.4189137192356291)-- (0.37473281145373316,1.4190619225753938)-- (0.381772470092533,1.4192101259151586)-- (0.38851572205181206,1.419234826471786)-- (0.3949625673315623,1.4193089281416684)-- (0.40207632764025225,1.419284227585041)-- (0.407930359560945,1.4193583292549232)-- (0.4156369332286551,1.419284227585041)-- (0.4232200041132648,1.418963120348884)-- (0.4317169955930882,1.4186914142259819);
\draw [line width=1.2pt,] (0.4317169955930882,1.4186914142259819)-- (0.4394729703740897,1.4183950075464524)-- (0.446,1.418)-- (0.45278657039625475,1.4175304880644912)-- (0.45987563014831667,1.4168882735921773)-- (0.4673104976931621,1.4160978557800987)-- (0.475091173030791,1.41530743796802)-- (0.4833905600575952,1.4142453140330393)-- (0.4910477326120799,1.4133560939944507)-- (0.49910011407261007,1.41229397005947)-- (0.5060903715981623,1.411207145567862)-- (0.5131547307935967,1.410021518849744)-- (0.5207378016782064,1.4087617904617438)-- (0.5285925786857161,1.407329158177351)-- (0.5364226551365996,1.4056248197700565)-- (0.5434129126621519,1.4040933852591542)-- (0.5512676896696628,1.4024137474084872);
\draw [line width=1.2pt,] (0.5512676896696628,1.4024137474084872)-- (0.5586284555446259,1.4005859062180555)-- (0.5659151197497082,1.3985851611312312)-- (0.5737945973138466,1.3965103143745248)-- (0.5825138938033164,1.3942131626081713)-- (0.59,1.392)-- (0.5967661149773221,1.389865864641739)-- (0.6055595131366726,1.387074701742836)-- (0.6143529112960245,1.3841847366174236)-- (0.6233933150216503,1.380973664255854)-- (0.6320385098412379,1.3776390891111474)-- (0.6370774233932274,1.3755395417978136)-- (0.6444628898248179,1.372476672776009)-- (0.6522682657190741,1.3692408998578123)-- (0.659110319904862,1.366474437515537)-- (0.6659276735340225,1.3634115684937325)-- (0.6732884394089855,1.3599534905658883)-- (0.6796858835754803,1.3568906215440837)-- (0.6860833277419751,1.3536548486258868)-- (0.6925301730217247,1.3503202734811801)-- (0.6988041144050825,1.347183302789493)-- (0.703849221656292,1.3442631171942738)-- (0.7106096189491798,1.3405558025497777)-- (0.717089631100888,1.3369419496190085)-- (0.724223875248682,1.3327673264058784)-- (0.7299561936905776,1.3293092429979871);
\draw [line width=1.2pt,] (0.7299561936905776,1.3293092429979871)-- (0.7363738980331317,1.325383851021462)-- (0.743103141421444,1.3209911504763028)-- (0.7495519996685764,1.3170034506906263)-- (0.7549416251601414,1.3133895977598569)-- (0.7596770186556204,1.3100872838748436)-- (0.7653158753837894,1.3059749684708755)-- (0.770923578207383,1.301987268685199)-- (0.777154359122487,1.2974076447125862)-- (0.7825751385186274,1.2930149441674268)-- (0.7872482242049553,1.28952570685496)-- (0.7925443879827937,1.2850706985006493)-- (0.7979963212835096,1.2809272291920948)-- (0.8037286397254052,1.2761606817920284)-- (0.8100840362588112,1.2707087484912987)-- (0.8161590476510375,1.2656618159500517)-- (0.8203648247687327,1.26192334740098)-- (0.8259413736877508,1.2567829531460062)-- (0.8317983077479485,1.2516737127956084)-- (0.8354433145832842,1.248059859864839)-- (0.8402721697924899,1.2434490819876505)-- (0.845381410142875,1.2381529182097988)-- (0.8511448824893462,1.2325140614816157)-- (0.8565968157900621,1.2267817430397057)-- (0.8614256709992677,1.2214544253572783)-- (0.8672826050594654,1.2150990288238563)-- (0.8726410766464547,1.2089617096224636)-- (0.87775031699684,1.2033228528942805)-- (0.88,1.2)-- (0.8868986834066963,1.19058488931153);
\draw [line width=1.2pt,] (0.8868986834066963,1.19058488931153)-- (0.897380936434565,1.178047292552677)-- (0.9078631894624335,1.1642764895552482)-- (0.9175233049979202,1.1511222896771074)-- (0.9247170080562614,1.1406400366492138)-- (0.9312941079953162,1.1305688523675121)-- (0.9374601391881802,1.1207032024589065)-- (0.9458870484850941,1.1073434682076695)-- (0.9516948524706488,1.097767898134204)-- (0.9568961286433775,1.0887028168045774)-- (0.9618001890348074,1.0800835591468996)-- (0.9657686098888901,1.0725389255245832)-- (0.9705650291689956,1.0639161492906655)-- (0.975,1.055)-- (0.98,1.045);
\draw [line width=1.2pt,] (0.98,1.045)-- (0.9835835373833466,1.0369252264131446)-- (0.9874786189553951,1.0285240700812552)-- (0.9905930364157587,1.0218906070079106)-- (0.9920453143191821,1.0186955956203716)-- (0.9933330958545789,1.0159785036293787)-- (0.9943000043479833,1.0139082517016138)-- (0.9954508034138442,1.0112032807014395)-- (0.9962376840029507,1.009310159677628)-- (0.996734312705552,1.008068587921122)-- (0.99719783282798,1.0069594504853099)-- (0.9980421016224025,1.0048405013542063)-- (0.9985828750985684,1.003510640050571)-- (0.999,1.0025);
\draw [line width=1.2pt,] (0.999,1.0025)-- (0.9993876171054071,1.0015231047009434)-- (0.9994769642680115,1.0012839375781342)-- (0.9996,1.001)-- (0.9997160179067515,1.0006937739074944)-- (0.9998243390868056,1.0004173681377009)-- (0.9999550715454915,1.0000849341713278);
\draw [line width=1.2pt] (-1.4579327674272156,-0.20799619816480194)-- (-1.196892165419736,-0.18051824005871248)-- (-0.8566503900477577,-0.1376849524227495)-- (-0.5883567394231182,-0.10088528009158544)-- (-0.4479690885763671,-0.07960953966472273)-- (-0.2605545190744047,-0.048970904872420624)-- (-0.09367063856469428,-0.018694277487243893)-- (3.036052656470094E-4,7.178715648877732E-5);
\draw [line width=1.2pt] (0.005557527706670098,1.332101727856274)-- (0.0026353136800025692,1.3310058975962709)-- (-0.0047371145375631986,1.3278246788787134)-- (-0.014722272305354273,1.3231402838765332)-- (-0.028035815995742373,1.3178395211109082)-- (-0.04795732732120207,1.3100755970419338)-- (-0.0725147911614706,1.2994124877428548)-- (-0.12583033765679041,1.2742087748541224)-- (-0.1910596993784429,1.239739269459747)-- (-0.26,1.2)-- (-0.31525163534964684,1.1650784281866293);
\draw [line width=1.2pt] (-0.31525163534964684,1.1650784281866293)-- (-0.37097411688511367,1.128658505614377)-- (-0.43325218448357666,1.0856829969791193)-- (-0.5071846273051438,1.032509910023631)-- (-0.6153517973445795,0.9480156896560026)-- (-0.7369943387357294,0.8467683049051415)-- (-0.8590010793526012,0.7371443379626611)-- (-0.9963041874497972,0.6085820112826112)-- (-1.1344788423768943,0.47137696652538863)-- (-1.225892163864254,0.37724839786500375)-- (-1.3182105677425775,0.28040458203172314)-- (-1.411434054011865,0.18084551902554685)-- (-1.496340100042257,0.08528652397526854);
\draw [line width=1.2pt] (-2.,2.)-- (-2.0061010188881174,1.9253915519147102)-- (-2.0126412894781573,1.8349178087523623)-- (-2.018091514969857,1.7291834342132328)-- (-2.0213616502648772,1.6299893302641528)-- (-2.022451695363217,1.5318852714134141)-- (-2.023541740461557,1.4250608517759433)-- (-2.022451695363217,1.3280468380235462)-- (-2.020271605166537,1.212502057599343)-- (-2.018091514969857,1.098047322273481)-- (-2.0104611992814774,0.9857726771443025)-- (-2.0006507933964173,0.86368762613005)-- (-1.9875702522163372,0.7307021241323821)-- (-1.971565998789021,0.6045255139060233)-- (-1.9535937676221757,0.47682808193088827)-- (-1.9318379088412576,0.32737479117480434)-- (-1.9006229810251576,0.1703542451905643)-- (-1.871299867016094,0.029414116566156074);
\draw [line width=1.2pt] (-2.,2.)-- (-1.9915032644961579,2.084383751349717)-- (-1.9833271569059092,2.1736920034894816)-- (-1.9738931866094687,2.262371324276149)-- (-1.9606856281944516,2.3604846153592707)-- (-1.9462202070732426,2.4579689750892895)-- (-1.9317547859520336,2.5497929526414413)-- (-1.9166604334777284,2.6365854793688177)-- (-1.9009371496503273,2.7246358688023884)-- (-1.8799591206362682,2.8330274741138224)-- (-1.856891311291082,2.9452908129272193)-- (-1.8245963782078216,3.079084107129487)-- (-1.8015285688626357,3.175968906379405)-- (-1.7707714897357207,3.286694391236454)-- (-1.7338629947834232,3.3974198760935033)-- (-1.693878791918434,3.520448192601336)-- (-1.6600460048788277,3.6188708458076015)-- (-1.6246753638828757,3.7126799371448236)-- (-1.5800775991488494,3.8234054220018727)-- (-1.5324041265021315,3.9479715924660574)-- (-1.483192799899068,4.055621369410411)-- (-1.4401328891213871,4.147892606791285)-- (-1.387845854605632,4.247853113953899)-- (-1.3386345280025684,4.341662205291121)-- (-1.2894232013995048,4.437009150584692)-- (-1.2371361668837497,4.524666826096523)-- (-1.1833112784116488,4.615400209521049)-- (-1.131024243895894,4.699982177120184)-- (-1.0618208158603357,4.807631954064537)-- (-1.,4.9)-- (-0.9326410835272935,4.989098720913587)-- (-0.8803540490115384,5.0644535647746345)-- (-0.8126884749323259,5.150573386330117)-- (-0.7496364627221507,5.22900393810386)-- (-0.6327595620398746,5.367410794175172)-- (-0.5758589656550823,5.428924952429088)-- (-0.5158826613575985,5.496590526508396)-- (-0.46051991892915195,5.555028976849616)-- (-0.40054361463166815,5.61808098905988)-- (-0.3359537484651472,5.68420870918284)-- (-0.27751529812400916,5.739571451611365)-- (-0.21600113987017963,5.801085609865281)-- (-0.14064629600923848,5.868751183944589)-- (-0.06007250391701196,5.94316180229321)-- (-0.03801637879127646,5.962953579003855);
\draw [line width=1.2pt] (-0.03801637879127646,5.962953579003855)-- (-0.016667409910643367,5.983278631602183)-- (0.,6.);
\draw [line width=1.2pt] (0.,6.)-- (0.007440940058896936,6.006056733359961)-- (0.02369806458035484,6.018249576751073)-- (0.015815822388132825,6.012214735072644)-- (0.030471866464295633,6.024653898532262)-- (0.03823094862226417,6.031550860450466)-- (0.04697531105426047,6.039309942608445)-- (0.10654089183360715,6.092910903332557);
\draw [line width=1.2pt] (0.10654089183360715,6.092910903332557)-- (0.33600770332278884,6.293800992945117)-- (0.6469746574028202,6.568265175603173)-- (0.8864238567355238,6.781756386512943)-- (1.2016280018748116,7.072591807826762)-- (1.4834575904699323,7.348064338032917)-- (1.7475845185077155,7.617749970021379)-- (1.902982648729627,7.782253459436214)-- (2.042598156350875,7.9352234938736395)-- (2.1967822386804277,8.108225318534998)-- (2.3855666859422033,8.32311179548282);
\draw [line width=1.2pt] (-2.421731464592546,-0.31500782845340164)-- (-3.0250476480032638,-0.3718223569361096)-- (-3.349702096475399,-0.4015823480460995)-- (-3.7365819809046936,-0.436753246630633)-- (-4.0720582443259,-0.46651323774062287)-- (-4.423767230170713,-0.4989786825878846)-- (-4.813352568337275,-0.534149581172418)-- (-5.140712470546678,-0.5639095722824079)-- (-5.484305095179688,-0.5963750171296696)-- (-5.82519226607543,-0.6261350082396595)-- (-6.1417303533357614,-0.6558949993496493)-- (-6.477206616756968,-0.6829495367223675)-- (-6.828915602601781,-0.7127095278323573)-- (-7.180624588446594,-0.7397640652050754)-- (-7.535039028028675,-0.7695240563150653)-- (-7.911097097508898,-0.801989501162327)-- (-8.24116245345557,-0.8263385847977732)-- (-8.563111448190437,-0.8533931221704913);
\draw [line width=1.2pt] (-2.0109660587517055,-0.5151610559327003)-- (-2.0558826671296444,-0.5702165047296889)-- (-2.094592199432454,-0.6218292144668616)-- (-2.1333017317352634,-0.6669903354868877)-- (-2.169246297445015,-0.7121514565069137)-- (-2.2079558297478243,-0.7609991996510236)-- (-2.2533393967777364,-0.8180001288433829)-- (-2.2945651640560945,-0.8701846443856862)-- (-2.336834621645297,-0.9213254696171436)-- (-2.3791040792344997,-0.9740318303148698)-- (-2.423460917445391,-1.0309129522559806)-- (-2.5236551872864634,-1.157199479868335)-- (-2.628425682633679,-1.2946819678289427)-- (-2.7796334558870357,-1.4923321285818356)-- (-2.945962006465728,-1.7083432332298059)-- (-3.1004099462888,-1.9157138936918574)-- (-3.262072970033177,-2.1330197218031617)-- (-3.3661036050054247,-2.274292327407467)-- (-3.484909712561634,-2.4384607669399228)-- (-3.6,-2.6)-- (-3.725762094243768,-2.763557479435115)-- (-3.859688979125313,-2.9514871404788474)-- (-3.9863246867435835,-3.1288872847188864)-- (-4.113533575947183,-3.304133097118505)-- (-4.233625883936595,-3.473151900955701)-- (-4.371509644961476,-3.6679683116943633)-- (-4.504945542727486,-3.857447286522378)-- (-4.681080927778624,-4.1029693384121995)-- (-4.831418705928333,-4.323583356052334)-- (-4.991541783247549,-4.549534809603111)-- (-5.181910330727059,-4.8199648957426255)-- (-5.393190755487162,-5.123365392875917)-- (-5.467151397253818,-5.231090675449246)-- (-5.538700278962864,-5.333188517888146)-- (-5.61587660080633,-5.44412948053829)-- (-5.707523482995434,-5.575972363687731)-- (-5.796758605126942,-5.707815246837177)-- (-5.891621167392868,-5.842873810063439)-- (-5.958346528986699,-5.940148132387116)-- (-6.,-6.);
\draw [line width=1.2pt] (-6.,-6.)-- (-6.1454624122092385,-6.211324631902503)-- (-6.271532861591601,-6.3963371095677966)-- (-6.415613375171445,-6.602634208557416)-- (-6.556419331624474,-6.808931307547034)-- (-6.7414318092895,-7.08235682763645)-- (-6.957617103723639,-7.4007595645619615)-- (-7.323035817220222,-7.940187189248189)-- (-7.876393949596476,-8.753616065345515)-- (-8.377242704232836,-9.491814669233868);
\draw [line width=1.2pt] (-1.7805615966263373,-0.31883845947713346)-- (-1.7728575551123,-0.34329779264779836)-- (-1.7637310132304043,-0.37253997786127246)-- (-1.7551632392188286,-0.39824329989604584)-- (-1.7443603937259724,-0.43046558041789945)-- (-1.733930060146663,-0.4611978132855631)-- (-1.7249897742215405,-0.4855973436229204)-- (-1.7147456965990056,-0.5131632252254312)-- (-1.7029510756722306,-0.5450676309058872)-- (-1.6877385828668638,-0.5859893043239564)-- (-1.6760044575901407,-0.6152314895374305)-- (-1.6633544518415526,-0.6462810319357901)-- (-1.647303328447496,-0.687185507681995)-- (-1.6307344268794375,-0.7270544270801947)-- (-1.616236638007386,-0.7638166774343789)-- (-1.5981144019173221,-0.8062744877025915)-- (-1.5774032749572489,-0.8539100797108301)-- (-1.5539103713977513,-0.9079445221829706)-- (-1.53483945596237,-0.9493324663193912)-- (-1.5141454838941901,-0.9927492312468128)-- (-1.496697625091607,-1.0304852979594317)-- (-1.4715402472832315,-1.0828288743672576)-- (-1.4498371756034836,-1.1242580763242478)-- (-1.4241965694704295,-1.1725227466924206)-- (-1.3970476923883723,-1.22267275574685)-- (-1.3698988153063152,-1.2705603583777714)-- (-1.342749938224258,-1.3180708932714413)-- (-1.322809517759893,-1.351123675369965)-- (-1.3069438637870714,-1.3772758522482809)-- (-1.288462992126422,-1.4069149860437056)-- (-1.2698077726199175,-1.435682380609853)-- (-1.2492347268090058,-1.4672393407096875)-- (-1.2235704651450579,-1.5057488372600978)-- (-1.2036101240199142,-1.534682175771816)-- (-1.1823679261252658,-1.5645311262617532)-- (-1.157463280317747,-1.598775014247141)-- (-1.1292624313886463,-1.6366813501454094)-- (-1.107653988702711,-1.6645159542832648)-- (-1.086594913203706,-1.6912518240472576)-- (-1.063704613748266,-1.7198189177676881)-- (-1.0360531320060944,-1.7527809489835697)-- (-1.0062041815162008,-1.787574204155889)-- (-0.9820320252912561,-1.8150425635024572)-- (-0.9532818091752233,-1.8461733707619008)-- (-0.9263628170156257,-1.8749235868779752)-- (-0.8932176634041477,-1.9082518628851444)-- (-0.8664817936401936,-1.9349877326491371)-- (-0.8375484551285174,-1.962456091995705)-- (-0.8018395879780309,-1.9948687560246607)-- (-0.767412577597049,-2.0250839513058856)-- (-0.7403104630418079,-2.048157373157003)-- (-0.712260074903584,-2.070395205919518)-- (-0.6896442378245455,-2.0887168967177527)-- (-0.6690323356765611,-2.1047483761662082)-- (-0.6438400108290245,-2.1233563433831653)-- (-0.62,-2.14)-- (-0.5931690847152294,-2.158568342886023)-- (-0.569746277555396,-2.1742575033846205)-- (-0.5469682226226843,-2.1889005386985274)-- (-0.5247818054804327,-2.202951936221974)-- (-0.49386873092889544,-2.221144798278646)-- (-0.4659138453296584,-2.2371190186210903)-- (-0.44210042426364166,-2.250282959458845)-- (-0.41991400712139004,-2.2616719869252173)-- (-0.3925507593126129,-2.275575475001064)-- (-0.366814515427601,-2.287851959153128)-- (-0.34,-2.3)-- (-0.31741275992418744,-2.309742557400181)-- (-0.2953742522295508,-2.3190608525999403)-- (-0.2768855712776745,-2.326160506085471)-- (-0.25558661082111295,-2.333999706809078)-- (-0.2316252803074812,-2.342578454770761)-- (-0.21,-2.35)-- (-0.1893231782895881,-2.356481942846592)-- (-0.1615162021379661,-2.3649127813607937)-- (-0.13193431261496394,-2.3730478009796316)-- (-0.10338778922526687,-2.3799995450175473)-- (-0.07365799025464971,-2.386655470160233)-- (-0.04940084084578795,-2.391684391379151)-- (-0.029285155970146484,-2.3952342181219164)-- (-0.017181955492201847,-2.3970681960307965)-- (-0.00751229028756571,-2.3986985465594874)-- (0.,-2.4)-- (0.01246138989065485,-2.4016990969121728)-- (0.023423829207000698,-2.4030568302219972)-- (0.03343082656458246,-2.404263704275175)-- (0.04816474729710234,-2.4059231560982943)-- (0.06475926552826808,-2.4076328943402956)-- (0.0783365986264946,-2.4087391955557087)-- (0.09095848976595702,-2.409644351095592)-- (0.10398267225647802,-2.4104489337977104)-- (0.11861602015123444,-2.411152943662064)-- (0.13093619277740307,-2.4116558078508876)-- (0.14320607898468937,-2.412058099201947)-- (0.15633083431297515,-2.4122592448774767)-- (0.172573347611963,-2.412309531296359)-- (0.19052559915295136,-2.4121586720397117)-- (0.20968472474711544,-2.4117060942697703)-- (0.22617867014051654,-2.4110020844054167)-- (0.245,-2.41)-- (0.26383047326146847,-2.408524885856892)-- (0.28197433832250585,-2.407129203929118)-- (0.3023668020449538,-2.404880605267704)-- (0.3250078644288124,-2.402011703527279)-- (0.35960070644029346,-2.396862706207764)-- (0.38908959088648465,-2.391446380493149)-- (0.4251984289838615,-2.384024008217566)-- (0.4619090810495281,-2.3751974033493046)-- (0.5018294076127393,-2.3641641472639776)-- (0.5493727111076189,-2.3489181934006176)-- (0.5977184332268834,-2.3314655883201856)-- (0.6462647600022458,-2.311405122710501)-- (0.6982213659312491,-2.2873325639788793)-- (0.7455640647700321,-2.2630594005911604)-- (0.795314019481956,-2.234974748737602)-- (0.8524863464694681,-2.199267119952363)-- (0.908053836208208,-2.161352839950059)-- (0.9700406749420382,-2.114411350423402)-- (1.0234015134637162,-2.070679535394289)-- (1.0693399797098224,-2.0305586041749195)-- (1.1168832832047029,-1.9850213472409217)-- (1.1670344472288365,-1.935070787872807)-- (1.1955203083945443,-1.9045788801460863)-- (1.219793471782225,-1.8768954376047216)-- (1.2518939974923946,-1.8399614077289006)-- (1.2810538404037113,-1.8048615968170758)-- (1.308053694951227,-1.7719217742690554)-- (1.3369435393170686,-1.734661974993426)-- (1.3596234171369819,-1.704152139354686)-- (1.3836532876842706,-1.6720223124430922)-- (1.40876315241346,-1.6366525029857917)-- (1.4333330200516994,-1.6015526920739669)-- (1.4519629196894852,-1.5742828389809336)-- (1.4714028149636964,-1.5448529975240959)-- (1.490032714601482,-1.5154231560672582)-- (1.5092880260065173,-1.4859001832953334)-- (1.5,-1.5)-- (1.520389210688688,-1.4680023549301733)-- (1.5360214911594996,-1.4419485541454464)-- (1.5625284015230498,-1.3989031441532889)-- (1.5867697639922793,-1.3592560560026175)-- (1.6108298694489156,-1.3198887199809326)-- (1.6427049500444193,-1.263638577753485)-- (1.675517533010379,-1.2087946890817236)-- (1.7036426041240589,-1.1591070634474783)-- (1.7284864169411427,-1.115981954406435)-- (1.7547364833139105,-1.069106835883562)-- (1.7833303056128185,-1.020825463805003)-- (1.813799132652641,-0.9702003358003002)-- (1.833486682432217,-0.9378565040195178)-- (1.8522367298413367,-0.906450174609193)-- (1.8705180260652288,-0.8787938546806978)-- (1.8892680734743486,-0.8511375347522028)-- (1.9070806185130125,-0.8239499660089364)-- (1.9248931635516764,-0.7976998996361275)-- (1.9435964809711592,-0.7707398139943259)-- (1.9633273325837743,-0.7437608944423406)-- (1.9832595194169262,-0.7169833101108924)-- (2.0019836949268517,-0.6940310949696444)-- (2.0186945182314138,-0.6734949024748499)-- (2.036009347197586,-0.6539653860827412)-- (2.0513108239583895,-0.6370532275576163)-- (2.072249686894226,-0.6145036828574496)-- (2.096208578138116,-0.589940785951911)-- (2.1298315599677817,-0.5581298211070334)-- (2.1630518713563682,-0.528936214129139)-- (2.1960708475244184,-0.5017559593566168)-- (2.2228484318558244,-0.48162243730289656)-- (2.2522433740542045,-0.46048223914648984)-- (2.2834503332374223,-0.43954337621062084)-- (2.309019906245607,-0.4234365585676447)-- (2.3353948201359396,-0.40813508180681735)-- (2.3625750749084196,-0.39283360504599)-- (2.391164676224658,-0.3771294578440883)-- (2.418344930997138,-0.36303599240648415)-- (2.450357231062503,-0.3477345156456568)-- (2.484584218553774,-0.3322317036642923)-- (2.524851262661152,-0.3147155394775556)-- (2.56572231243014,-0.2986087218345794)-- (2.6051840156553703,-0.2839112507353637)-- (2.653303133363687,-0.26659642176916437)-- (2.7133010290836794,-0.24686557015651858)-- (2.7765202883322626,-0.22753738898494724)-- (2.826048752584337,-0.21324258832680593)-- (2.866315796691715,-0.202370486417797)-- (2.9087975282249987,-0.1919010549498625)-- (2.977855508869151,-0.17438489076312597)-- (3.025571956136394,-0.16351278885411705)-- (3.0805364713429646,-0.15143267562188495)-- (3.159057207352351,-0.1351245227583716)-- (3.232343227628072,-0.12042705165915586)-- (3.3100586227553186,-0.10613225100101453)-- (3.3797206090610885,-0.09344813210717082)-- (3.4487785897052476,-0.08177068931601311)-- (3.512400519394947,-0.07150259306861582)-- (3.588505232757899,-0.05922114461584651)-- (3.6712540083985687,-0.046537025722002794)-- (3.74152000036595,-0.0360675942540683)-- (3.8115846571127645,-0.02620216844774541)-- (3.88547468304981,-0.015531401759273714)-- (3.956747351119875,-0.005867311173488026)-- (4.,0.)-- (4.056609620506188,0.007622148602504496)-- (4.1365397030593405,0.017487574408827387)-- (4.247676744795713,0.03198371028750592)-- (4.375725945057187,0.04748652226887046)-- (4.51223122458121,0.0633920046913094)-- (4.629408322933691,0.07688146446730193)-- (4.905640245510354,0.1070817475478822)-- (5.273133169494088,0.14559636797820494)-- (5.643348934558216,0.18261794448467142)-- (6.027535105851179,0.22033804130258067)-- (6.497639275451477,0.2657418615463603)-- (6.927229266988155,0.3048589989871551)-- (7.514684848910693,0.3579465426568051)-- (8.098716413217973,0.4088154119643845);
\draw [line width=1.2pt] (3.036052656470094E-4,7.178715648877732E-5)-- (0.0013055303442626436,2.8052154786733233E-4)-- (0.002630458171969699,5.856850042979952E-4)-- (0.005358076966663793,0.001155865204319609)-- (0.010352705921614642,0.002299448619662259)-- (0.022030131842021718,0.0047274282664830376)-- (0.03959551073952203,0.008608414235823985)-- (0.061281302413794314,0.013283532960414035)-- (0.09473938099695726,0.020662334802839296)-- (0.12684561802120423,0.027984809913642994)-- (0.15562857787978374,0.03480034443985259)-- (0.1921282999705071,0.043587314572817025)-- (0.2177569628582834,0.05000856197767565)-- (0.255721179971165,0.05975308654820672)-- (0.2843914863665016,0.06741352204873981)-- (0.31244219871400214,0.07524293774413761)-- (0.34415414861689897,0.0842552148035883)-- (0.36730443531333073,0.09123972952466254)-- (0.3979461773154194,0.10058996697384265)-- (0.4312352757037172,0.11134837271356124)-- (0.4605251761468902,0.12120555074733538)-- (0.4947155022411325,0.13325947131435062)-- (0.5183164027905355,0.1419901147156934)-- (0.5436634320202044,0.15184729274946754)-- (0.5692357681763596,0.16215508463621423)-- (0.5940195300898136,0.17257552998620404)-- (0.6183526781502958,0.1833902624575448)-- (0.6389119351921383,0.19290948010158968)-- (0.659020578381009,0.20276665813536385)-- (0.679974122544202,0.21341241041183992)-- (0.7034060428987403,0.22580429136858432)-- (0.723176725697882,0.23690065749803293)-- (0.7442429233243172,0.2494615186496438)-- (0.7645768734396738,0.26224768672773946)-- (0.7831646948747645,0.2745832409528596)-- (0.8008512886039112,0.28703144864122576)-- (0.8164537932630638,0.2985221018920256)-- (0.8342530404554536,0.31243480460255246)-- (0.8505877926256846,0.32600954692334955)-- (0.8670351982591586,0.3407671506081999)-- (0.8816238217491227,0.35451087312386215)-- (0.8961561185074662,0.3689868431506047)-- (0.9097308608282444,0.3836317933722119)-- (0.9225733556379433,0.3985020505203054)-- (0.9337823752305338,0.41218944630434606)-- (0.9461742561872596,0.42852419847460027)-- (0.9564257213423699,0.4430564952329645)-- (0.9642551370377558,0.45522306926322276)-- (0.9726663621557098,0.46882481975162216)-- (0.9789940136402842,0.4793709055592628)-- (0.9878869292402266,0.49601661937456587)-- (0.9965518213632428,0.5134034094898653)-- (1.00470366066319,0.5314172641667001)-- (1.01,0.545)-- (1.0163328579862076,0.5614593572511682)-- (1.021520392086214,0.57662291846648)-- (1.0253967911938815,0.5892212155664182)-- (1.0288171433477313,0.6014204715817417)-- (1.0318384544169428,0.6132206865124531)-- (1.0353728183092352,0.6286122712046854)-- (1.0391922115476804,0.6481652843507464)-- (1.0419284932707398,0.6656090803352763)-- (1.0442359008877171,0.6842627407300443)-- (1.04583193080892,0.6997933395786954)-- (1.0469982603667218,0.7160605676217502)-- (1.0475507322625228,0.7279080204983891)-- (1.0478576610935233,0.7462623645922495)-- (1.047734889561123,0.7647394802185087)-- (1.047182417665322,0.7795334498727573)-- (1.046200245406122,0.7964145355778112)-- (1.0447269870173195,0.8133570070490669)-- (1.0432694280382249,0.8269089584957893)-- (1.0413821226564024,0.8414266922021371)-- (1.0390592852633902,0.8558718372399531)-- (1.0360831498535945,0.8730027630134419)-- (1.0328892484382026,0.8887545040848293)-- (1.029695347022811,0.9033448264597087)-- (1.026574034275947,0.9162656094583579)-- (1.0238882535402767,0.9262828457157378)-- (1.0209121181304799,0.9370259686584351)-- (1.0171375073668352,0.9494386309773625)-- (1.014117818755902,0.95918003029433)-- (1.0110037648758952,0.9685221919343648)-- (1.0078098634605035,0.9777046585036299)-- (1.005,0.985)-- (1.0025399261251071,0.9921570624082992)-- (1.0000956830303942,0.9997304687014905)-- (1.0000581673361866,0.9998272762384144)-- (1.0000184252260595,0.9999291790848941)-- (0.9999847972867213,1.0000117203905425)-- (0.9999550715454915,1.0000849341713278);
\draw [line width=1.2pt] (-1.7805615966263373,-0.31883845947713346)-- (-1.787266990469414,-0.297649414932973)-- (-1.792094874036429,-0.28102003820211285)-- (-1.7970568654803059,-0.26519530873242336)-- (-1.8015368323981797,-0.2511411586067161)-- (-1.8060334998111258,-0.23393377682782404)-- (-1.8083471699002662,-0.22511735359381138)-- (-1.811825653588652,-0.2124200627969742)-- (-1.8152451457855412,-0.1988649283971524)-- (-1.8188080757435487,-0.1835652879892094)-- (-1.8221687853342348,-0.16897890946545055)-- (-1.8254785874928212,-0.15449852502160932)-- (-1.8291019844924927,-0.1400193068223624)-- (-1.833100537387191,-0.1233586697610905)-- (-1.837432303023114,-0.10536518173491682)-- (-1.8414308559178123,-0.08803811919119403)-- (-1.8450804697244878,-0.07286501419684081)-- (-1.8487857481977452,-0.05804390030378533)-- (-1.8524910266710024,-0.044281437403090956)-- (-1.8561963051442598,-0.030518974502396573)-- (-1.859901583617517,-0.01622718610552164)-- (-1.8633467225857363,-0.002389405080179365)-- (-1.8666138055378099,0.010025510137722708)-- (-1.869322799897498,0.020402262952604503)-- (-1.871299867016094,0.029414116566156074);
\draw [line width=1.2pt] (-2.421731464592546,-0.31500782845340164)-- (-2.3166344405164723,-0.30331876168655963)-- (-2.170305625126053,-0.28714557682759007)-- (-2.010884231516491,-0.26789178532881674)-- (-1.877647994345215,-0.25248875212979804)-- (-1.7459520604938379,-0.23785587059073032)-- (-1.6096352166827632,-0.2232229890516626)-- (-1.4579327674272156,-0.20799619816480194);
\draw [line width=1.2pt] (-2.0109660587517055,-0.5151610559327003)-- (-1.963660956215121,-0.4527839017255142)-- (-1.919502145095105,-0.3952941287578435)-- (-1.8669432832706956,-0.3342726887883281)-- (-1.82362547331279,-0.2817383235201345)-- (-1.7747777301687684,-0.22183071400377338)-- (-1.7213217093696507,-0.1610014489563913)-- (-1.67155231069461,-0.10662377262615581)-- (-1.6236262230816079,-0.055011062888983144)-- (-1.5626074124038152,0.012619097182656953)-- (-1.496340100042257,0.08528652397526854);
\begin{scriptsize}
\draw [fill=black] (0.,0.) circle (3.pt);
\draw[color=black] (0.3,-0.5) node {$Z_2$};
\draw [fill=black] (-2.,2.) circle (3.pt);
\draw[color=black] (-2.5,1.8) node {$Z_1$};
\draw [fill=black] (1.,1.) circle (3.pt);
\draw[color=black] (1.6,1.1) node {$Z_3$};
\draw [fill=black] (0.,6.) circle (3.pt);
\draw[color=black] (-0.6,6.2) node {$Z_4$};
\draw [fill=black] (-6.,-6.) circle (3.pt);
\draw[color=black] (-5.4,-6.2) node {$Z_7$};
\draw [fill=black] (1.5,-1.5) circle (3.pt);
\draw[color=black] (2.1,-1.6) node {$Z_6$};
\draw [fill=black] (4.,0.) circle (3.pt);
\draw[color=black] (3.9,0.5) node {$Z_9$};
\draw [fill=black] (0.,1.33) circle (3.pt);
\draw[color=black] (-0.4,1.8) node {$Z_5$};
\draw [fill=black] (0.,-2.4) circle (3.pt);
\draw[color=black] (0.3,-2.9) node {$Z_8$};
\draw [fill=white] (-1.8035052604104964,-0.24410849141067362) circle (6pt);
\draw [fill=black] (-1.8035052604104964,-0.24410849141067362) circle (3pt);
\draw[color=black] (-2.3,0.1) node {$P$};
\end{scriptsize}
\end{tikzpicture}
\caption{A configuration of nine points in $\P^2$ admitting an unexpected quartic.}
\end{figure}
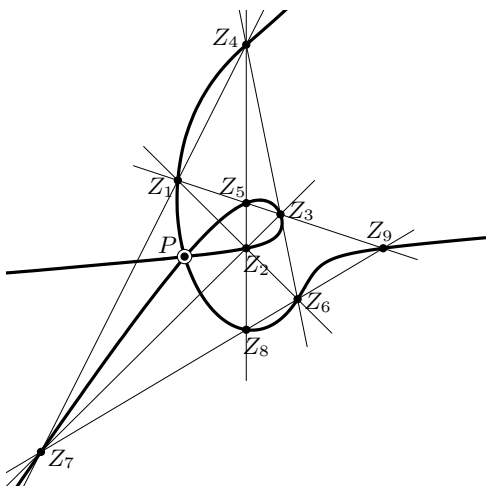

In this paper we analyze the geometry of this configuration and prove the following result.

\begin{thm}\label{thm: Main result}
Up to projective equivalence, the configuration of points $Z\subset\P^2$ in Example \ref{ex:UnexpectedQuartic} is the only one which admits an unexpected curve of degree four.
\end{thm}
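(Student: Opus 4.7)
The plan is to proceed in three stages: pin down $|Z|$, analyze the unexpected quartic $C_P$ itself, and use birational transformations to identify the projective type of $Z$.

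I would first show $|Z|=9$. Since
\[
\vdim I(Z+3P)_4 \;=\; \binom{6}{2} - |Z| - \binom{4}{2} \;=\; 9-|Z|,
\]
for $|Z|\geq 10$ we have $\edim I(Z+3P)_4 = 0$, and then the unexpectedness inequality in Definition \ref{def: unex curve} becomes incompatible with the Bézout-type bounds on $\dim I(Z+3P)_4$ that follow once one analyzes which components an unexpected quartic could contain; this forces $|Z|\leq 9$. The lower bound $|Z|\geq 9$ comes from combining Theorem \ref{thm:nounexpectedcubics} with the CHMN constraints on the splitting type of the derivation bundle of the dual line arrangement: fewer than nine points would either force $C_P$ (after peeling off a fixed line through some of the $Z_i$) to reduce to an unexpected cubic for a smaller subconfiguration, or directly violate the splitting bound.

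Next, for general $P$ with $|Z|=9$, I would analyze the structure of $C_P$. A plane quartic with a triple point is rational. If $C_P$ contained a fixed line component $L$, then the residual cubic would be an unexpected cubic for $Z\setminus (Z\cap L)$, contradicting Theorem \ref{thm:nounexpectedcubics}; analogous reductions using the nonexistence of unexpected conics (Theorem 1.2 and Corollary 6.8 of CHMN) rule out the remaining decompositions ($C_P = Q_1+Q_2$, or $C_P = L_1+L_2+Q$, etc.). Hence $C_P$ is irreducible and rational, and its tangent cone at $P$ singles out three directions through $P$ that vary algebraically with $P$.

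Finally I would apply a de Jonquières transformation centered at $P$ (chosen so that its net of adjoint conics matches the triple point and three tangent directions of $C_P$) to pull $C_P$ back to a much simpler curve, ideally a line, and $Z$ back to a configuration whose collinearities are forced by the construction. Reversing the transformation yields the $4+3+2$ decomposition predicted by Example \ref{ex:UnexpectedQuartic}: four general points $Z_1,\dots,Z_4$, then the three diagonal intersections $Z_5,Z_6,Z_7$ of the six lines $\overline{Z_iZ_j}$, and finally two further points $Z_8,Z_9$ on a line through two of $Z_5,Z_6,Z_7$. The remark closing Example \ref{ex:UnexpectedQuartic} then gives projective uniqueness. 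The main obstacle I anticipate is precisely this last stage: systematically extracting the $4+3+2$ decomposition from the two-parameter family $\{C_P\}_{P}$ together with its varying tangent cones. I would attack it by specializing $P$ to positions of special incidence with the $Z_i$'s (e.g., letting $P$ approach a line already spanned by two $Z_i$), tracking the limiting degenerations of $C_P$, and combining these with Bézout counts bounding the number of $Z_i$ that may lie on any given line.
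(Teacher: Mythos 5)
Your outline agrees with the paper only at the coarsest level (reduce to $|Z|=9$, rule out reducible $C_P$, then identify the configuration), but the heart of the proof --- the identification stage --- is missing, and you say so yourself. The de Jonqui\`eres strategy has two concrete problems. First, the degree-four de Jonqui\`eres map with centers $P$ and six of the nine points is only defined if the system $I(Z_1+\cdots+Z_6+3P)_4$ has no fixed component and defines a birational map; the paper relegates this idea to a closing example precisely because of this caveat, and its final example (the Fermat configuration $F_{60}$) shows the failure mode is real. Second, even when the map is birational, what you get is that the \emph{images} $\Phi(Z_7),\Phi(Z_8),\Phi(Z_9)$ are collinear in the target plane; since the inverse is again a de Jonqui\`eres map of degree four, pulling that collinearity back yields a condition that $Z_7,Z_8,Z_9$ lie on a certain quartic, not the incidence structure ($4$-rich lines, the $4+3+2$ decomposition) of $Z$ itself. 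No mechanism in your sketch extracts the actual configuration of Example \ref{ex:UnexpectedQuartic}, and the specializations of $P$ you propose are exactly the hard work the paper carries out, but in a different framework: it first proves via careful coordinate degenerations (Propositions \ref{pro:four and four}--\ref{pro:only one with four}, Corollary \ref{cor:final step}) that any nine-point $Z$ with a $4$-rich line admitting an unexpected quartic is the configuration of Example \ref{ex:UnexpectedQuartic}, and then disposes of all $Z$ \emph{without} $4$-rich lines by stability of the dual line arrangement: such a $Z$ is either the dual Fermat configuration $F_3$ (shown to be the unique nine-point set with no simple lines and no $k$-rich lines, $k\ge 4$, in Corollary \ref{cor: Fermat is the only one without simple lines}) or semistable (Proposition \ref{pro: Fermat or die}), and neither admits an unexpected curve (Lemma \ref{lem:no unexpected if stable}). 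Your proposal contains no substitute for this dichotomy.

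There is also an ordering problem in your first stage. Ruling out $|Z|\ge 10$ is not a routine B\'ezout count: the paper's argument (in the proof of Theorem \ref{thm: Main result}) first uses \cite[Corollary 5.5]{CHMN} to see that every nine-point subset $W\subset Z$ satisfies $\dim I(W+3P)_4=\dim I(Z+3P)_4=1$, hence admits an unexpected quartic, and then invokes the already-established $|Z|=9$ classification (Lemma \ref{lem:halfway done}) to conclude that every nine-point subset of a ten-point subset would have to be projectively equivalent to Example \ref{ex:UnexpectedQuartic}, forcing the tenth point onto the intersection of two $3$-rich lines, which is impossible. So the exclusion of large $Z$ \emph{depends on} the uniqueness statement for nine points; your plan to settle $|Z|=9$ first, by ``B\'ezout-type bounds'' alone, cannot work as stated. (Your lower bound $|Z|\ge 9$ via reduction to unexpected cubics and Theorem \ref{thm:nounexpectedcubics} does match the paper's use of \cite[Corollary 5.5, Theorem 5.9]{CHMN}, and your irreducibility discussion of $C_P$ is essentially fine.)
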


Our paper is organized as follows. 
In Section \ref{sec: special config} we use plane geometry arguments and B\'{e}zout's Theorem to prove Theorem \ref{thm:nounexpectedcubics}. Moreover, we describe in detail the configuration of nine points of Example \ref{ex:UnexpectedQuartic} and give a geometric proof of the existence of an unexpected quartic.
Then we turn our attention to all possible configurations admitting an unexpected quartic. In Section \ref{sec: uniqueness proof, geom arg} we prove some tight necessary conditions on a set $Z$ with this property, and we achieve the results with a degeneration technique.
Finally, in Section \ref{sec: semi stable approach} we show how these necessary conditions lead to a unique configuration of points.
Here, the stability of vector bundles turns out to be a powerful tool to prove Theorem \ref{thm: Main result}.

\section{Unexpected cubics and quartics in \texorpdfstring{$\mathbb{P}^2$}{P2}}\label{sec: special config}

Let us fix the notation.
Given two points $A,B\in\mathbb{P}^2$, we denote by $AB$ the line joining them. We call a line \emph{simple} if it contains only two points of $Z$. For $k\geq 3$, we say that a line is  \emph{$k$-rich} if it contains exactly $k$ points of $Z$.

A standard tool to prove that a linear system is empty is to degenerate some of the points to a special position. If the degenerated linear system is empty, then the original one is empty as well. 

As shown in \cite[Corollary 5.5]{CHMN}, the most interesting case is $|Z|=2d+1$.
We will repeatedly use the following simple but useful result.

\begin{proposition}\label{pro:two and four}
Let $d$ be a positive integer. Let $Z$ be a set of $2d+1$ distinct points, and $P\in\P^2$ a general point. If there are a $d$-rich line $L_Q$ and a simple line $L_R$ such that $L_Q\cap L_R\notin Z$, then $I(Z+(d-1)P)_d=0$.
\end{proposition}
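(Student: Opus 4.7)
The plan is to specialize the general point $P$ onto the simple line $L_R$ and then apply B\'ezout's Theorem twice to strip off $L_R$ and $L_Q$ from any putative curve in $I(Z + (d-1)P)_d$. Since $\dim I(Z + (d-1)P)_d$ is upper semicontinuous in $P\in\P^2$, it suffices to show the vector space vanishes for a general point $P \in L_R$: the locus $\{P : \dim I(Z+(d-1)P)_d \geq 1\}$ is closed, and if it avoids a general point of $L_R$ it must be a proper subvariety of $\P^2$.

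Fix $P \in L_R$ general, so that $P \notin Z$ and $P \neq L_Q \cap L_R$; in particular $P \notin L_Q$. Assume for contradiction that some nonzero $C \in I(Z + (d-1)P)_d$ exists. Intersecting $C$ with $L_R$, the local intersection number at $P$ is at least $d-1$, and $C$ passes through the two points of $Z \cap L_R$ distinct from $P$, so $C \cdot L_R \geq d + 1 > d = \deg C$. B\'ezout forces $L_R \subseteq C$, and we write $C = L_R \cdot D$ with $\deg D = d - 1$ and $\mathrm{mult}_P D \geq d - 2$. Next, $D$ meets $L_Q$ in the $d$ points of $Z \cap L_Q$, all distinct from $P$ since $P \notin L_Q$, giving $D \cdot L_Q \geq d > d - 1 = \deg D$; hence $L_Q \subseteq D$. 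Writing $D = L_Q \cdot E$, we obtain $\deg E = d - 2$ and $\mathrm{mult}_P E = \mathrm{mult}_P D \geq d - 2$ (again using $P \notin L_Q$), so $\mathrm{mult}_P E = d - 2$.

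Because multiplicity equals degree, $E$ is a union of $d - 2$ lines through $P$: in affine coordinates at $P$ the defining polynomial is homogeneous of degree $d - 2$. Yet $E$ must also vanish at the $(2d+1) - (d+2) = d - 1$ points of $Z$ off $L_Q \cup L_R$. By the pigeonhole principle, at least two of these points share a single component of $E$, forcing $P$ to lie on the line through those two points. Only $\binom{d-1}{2}$ such lines exist, and none of them coincides with $L_R$ (each contains a point off $L_R$), so each meets $L_R$ in a single point; a general $P \in L_R$ avoids them all, a contradiction. The case $d = 2$ degenerates harmlessly: $E$ must be a nonzero constant vanishing at a point, which is absurd.

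The only delicate choice is to specialize $P$ onto the simple line $L_R$ rather than the $d$-rich line $L_Q$; this is what enables the successive stripping of both lines and reduces matters to the rigid residual $E$, whose incompatibility with the genericity of $P$ on $L_R$ delivers the contradiction. No deeper obstacle arises beyond this choice of degeneration.
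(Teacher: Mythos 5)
Your proof is correct and follows essentially the same route as the paper's: specialize $P$ to a general point of the simple line $L_R$ (justified by semicontinuity, which the paper invokes implicitly), strip off $L_R$ and then $L_Q$ by B\'ezout, and derive a contradiction from the degree-$(d-2)$ residual with a $(d-2)$-fold point at $P$ that must pass through the remaining $d-1$ points of $Z$. The only cosmetic difference is the last step: the paper applies B\'ezout once more to each line $PS_i$ to force $d-1$ line components into a degree-$(d-2)$ curve, whereas you observe that the residual is a cone of $d-2$ lines through $P$ and conclude by pigeonhole plus the genericity of $P$ on $L_R$ --- an equivalent argument that has the merit of making the needed genericity (avoiding the lines $S_iS_j$) explicit.
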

\begin{proof}
Assume by contradiction that $I(Z+(d-1)P)_d\neq 0$. Consider $\{Q_1,\ldots,Q_d\}=Z\cap L_Q$, $\{R_1,R_2\}=Z\cap L_R$ and $\{S_1,\ldots,S_{d-1}\}=Z\setminus(L_Q\cup L_R)$. Let $C$ be the degree $d$ curve defined by a non-zero element of $I(Z+(d-1)P)_d$.

We specialize the $(d-1)$-ple point $P$ to a general point on $L_R$. By B\'ezout's Theorem, $L_R$ and $L_Q$ are irreducible components of $C$. We are left with a degree $d-2$ curve $C'$ passing through $d-1$ simple points and a $(d-2)$-ple point $P$. Again by B\'{e}zout's Theorem, for all $i\in\{1,\ldots,d-1\}$ the line joining $S_i$ and $P$ is an irreducible component of $C'$. Hence $C'$ is a curve of degree $d-2$ having $d-1$ lines as components, which is impossible.\qedhere
\end{proof}

We now consider the problem of existence of unexpected cubics and we show that they cannot appear if the ground field is $\mathbb{C}$.

\begin{proof}[Proof of Theorem \ref{thm:nounexpectedcubics}]
Consider a set of points $Z\subset\P^2$. If $|Z|<7$, then any unexpected cubic is reducible by \cite[Corollary 5.5]{CHMN}. By \cite[Theorem 5.9]{CHMN}, this implies that some subset of $Z$ admits an unexpected conic, but this is impossible.

Assume now that $|Z|\ge 7$. Let $W$ be any subset of seven points of $Z$. Observe that $I(W+2P)_3$ contains $I(Z+2P)_3$ for every $P\in\p^2$, hence $\dim I(W+2P)_3\ge\dim I(Z+2P)_3$. Moreover, if $Z$ admits an unexpected cubic, then there exists a subset of seven points of $Z$ admitting an unexpected cubic: indeed, any seven smooth points of an irreducible plane cubic impose independent conditions on the system of plane cubics. For this reason, in order to conclude it is enough to prove that no subset $Z$ of seven points admits an unexpected cubic over $\mathbb{C}$. Therefore, for the rest of the proof we assume that $|Z|=7$.

If an unexpected cubic exists, then \cite[Theorem 1.2]{CHMN} implies that $Z$ contains no subset of four or more collinear points. On the other hand, \cite[Corollary 6.8]{CHMN} shows that the points of $Z$ cannot be in linearly general position. Suppose then that $L$ is a $3$-rich line and consider $\{Z_1,Z_2,Z_3\}=Z\cap L$. Let $Z_4$ and $Z_5$ be two points of $Z\setminus L$. By Proposition \ref{pro:two and four}, $Z_4Z_5$ must meet $L$ at a point of $Z$, and we assume that $Z_3=L\cap Z_4Z_5$. Since there cannot be four collinear points, we have that $Z\setminus(L\cup Z_1Z_2)=\{Z_6,Z_7\}$. Again by Proposition \ref{pro:two and four}, the lines $Z_4Z_6$ and $Z_5Z_6$ meet $L$ at a point of $Z$. Up to relabeling, the only possibility is that $Z_1\in Z_4Z_6$ and that $Z_2\in Z_5Z_6$. A similar argument is used to show that $Z_2\in Z_4Z_7$ and that $Z_1\in Z_5Z_7$. Hence, up to projective equivalence, $Z$ is the configuration described in Figure \ref{fig: seven points}.

\begin{figure}[ht]
\centering
\begin{tikzpicture}[line cap=round,line join=round,>=triangle 45,x=1.0cm,y=1.0cm, scale=0.3]
\clip(-5.7,-2.7) rectangle (7.3,9.1);
\draw [line width=0.8pt,domain=-1.:7] plot(\x,{(--10.27168882144967-2.2978427523614644*\x)/2.2245073453712085});
\draw [line width=0.8pt,domain=-5.5:7.2] plot(\x,{(--0.6415341939736247-1.8089400390930679*\x)/5.4757103886060525});
\draw [line width=0.8pt,domain=-5.3:5.] plot(\x,{(-21.478855519620723-3.226757907571418*\x)/-4.302343876761898});
\draw [line width=0.8pt,domain=0.6:4.55] plot(\x,{(--2.8481990858513844-2.7378551943030214*\x)/-1.0511408335270547});
\draw [line width=0.8pt,domain=-0.47:1.2] plot(\x,{(--4.614152570979933-5.035697946664486*\x)/1.1733665118441539});
\draw [line width=0.8pt,domain=-5.6:3.] plot(\x,{(-14.686487978559086-0.9289151552099537*\x)/-6.526851222133107});
\begin{scriptsize}
\draw [fill=black] (-0.21023924747911593,4.834681968980781) circle (3.2pt);
\draw[color=black] (-1.2,5) node {$Z_5$};
\draw [fill=black] (2.0142680978920926,2.5368392166193163) circle (3.2pt);
\draw[color=black] (2.3,1.6) node {$Z_2$};
\draw[color=black] (3.5,4.5) node {$L$};
\draw [fill=black] (-4.512583124241014,1.6079240614093626) circle (3.2pt);
\draw[color=black] (-4.7,2.4) node {$Z_4$};
\draw [fill=black] (0.963127264365038,-0.2010159776837052) circle (3.2pt);
\draw[color=black] (0.1,-0.5) node {$Z_1$};
\draw [fill=black] (0.37939673098534854,2.304160894737458) circle (3.2pt);
\draw[color=black] (-0.4,2.8) node {$Z_7$};
\draw [fill=black] (6.4051927350687485,-1.9988411778268924) circle (3.2pt);
\draw[color=black] (6.8,-1.5) node {$Z_6$};
\draw [fill=black] (4.152796000585067,8.106958405028912) circle (3.2pt);
\draw[color=black] (4.8,7.7) node {$Z_3$};
\end{scriptsize}
\end{tikzpicture} 
\caption{The considered configuration of seven points.}\label{fig: seven points}
\end{figure}
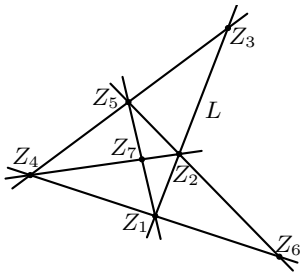

It is easy to check that in this case it is not possible that $Z_3\in Z_6Z_7$ over $\mathbb{C}$.
Hence the line $Z_6Z_7$ is simple and $L$ is a $3$-rich line, and therefore $Z$ does not admit an unexpected cubic by Proposition~\ref{pro:two and four}.
\end{proof}

Actually, a stronger version of Theorem \ref{thm:nounexpectedcubics} holds. In \cite{Sol}, Akesseh proves that unexpected cubics exist only if the characteristic of the ground field is 2.

Now we turn our attention to the case $d=4$. The proof of existence of the unexpected quartic in Example \ref{ex:UnexpectedQuartic} presented in \cite{CHMN} uses splitting types.  Here we give a new, simpler proof.

\begin{proposition}\label{thm:SufficientConditionQuartic}
The configuration $Z$ of nine points of Example \ref{ex:UnexpectedQuartic} admits an unexpected curve of degree four.
\end{proposition}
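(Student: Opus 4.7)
My plan is first to compute $\dim I(Z)_4$ and then to exhibit (or argue for the existence of) a nonzero quartic with triple point at $P$ passing through $Z$, for a general $P$. In suitable coordinates I may take $Z_1=[1{:}0{:}0]$, $Z_2=[0{:}1{:}0]$, $Z_3=[0{:}0{:}1]$, $Z_4=[1{:}1{:}1]$; then the construction of Example~\ref{ex:UnexpectedQuartic} yields
\[
Z_5=[1{:}1{:}0],\ Z_6=[1{:}0{:}1],\ Z_7=[0{:}1{:}1],\ Z_8=[1{:}-1{:}0],\ Z_9=[1{:}1{:}2].
\]
A routine check on the $15$-dimensional space of plane quartics shows that the nine evaluation conditions at the $Z_i$ are linearly independent, so $\dim I(Z)_4=6$ and hence $\max\{\dim I(Z)_4-\binom{4}{2},0\}=0$. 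It therefore suffices to produce a nonzero quartic in $I(Z+3P)_4$ for general $P\in\mathbb{P}^2$.

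The crucial geometric input is the cubic $C=z(x-y)(x+y-z)\in I(Z)_3$, whose three components are the $4$-rich lines $L_1,L_2,L_3$ of $Z$. This cubic has nodes exactly at $Z_5,Z_8,Z_9$ (the pairwise intersections of the lines), and the other six points lie in pairs on the smooth parts of the $L_i$. I would exploit this structure to write down a distinguished basis of the $6$-dimensional space $I(Z)_4$: three of the basis quartics come from the subspace $C\cdot\{x,y,z\}$, and three further ones come from multiplying each of the $3$-rich lines $y=0$, $x=0$, $y-z=0$ by a cubic through the six complementary points of~$Z$.

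With a basis $F_1,\dots,F_6$ of $I(Z)_4$ at hand, the condition ``$Q$ has triple point at $P=[a{:}b{:}c]$'' translates into a $6\times 6$ linear system $M(P)\lambda=0$ in the coefficients of $Q=\sum\lambda_j F_j$, with matrix entries polynomial in $a,b,c$. My plan is to show that $\det M(P)$ vanishes identically as a polynomial in $a,b,c$. Concretely, I would exhibit an explicit element of $\ker M(P)$ depending polynomially on $P$: this is the unexpected quartic $Q_P$. The hard step is guessing the correct closed-form expression for $Q_P$; it should be irreducible, since by B\'ezout no reducible quartic with triple point at a general $P$ can contain all nine points of $Z$ (any line through $P$ meets $Z$ in at most one point for generic $P$, so the $9$ points cannot be distributed on three lines through $P$ plus one further line). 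The construction of $Q_P$ should exploit the harmonic relations tying $Z_5,Z_8,Z_9$ to the frame $Z_1,\dots,Z_4$ --- for instance the fact that $Z_5,Z_8$ are harmonic conjugates with respect to $Z_1,Z_2$ on $L_1$, and $Z_5,Z_9$ are harmonic conjugates with respect to $Z_3,Z_4$ on $L_2$ --- possibly via a de Jonqui\`eres-type involution based at one of the nodes of $C$.

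Once $Q_P$ is written down, the verification that $Q_P\in I(Z+3P)_4$ is routine: both the vanishing on $Z$ and the triple point at $P$ are linear conditions that can be checked by direct substitution in the formula. The main obstacle is therefore the explicit production of $Q_P$; in the worst case one can bypass this by verifying the identity $\det M(P)\equiv 0$ via a direct symbolic computation, which is feasible since the matrix is of fixed size $6\times 6$ with polynomial entries of controlled degree.
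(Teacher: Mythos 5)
Your overall strategy --- fix coordinates, reduce unexpectedness to the identical vanishing in $P$ of a determinant of derivative conditions on an explicit basis, then verify that vanishing --- has the same skeleton as the paper's proof, but the paper sets up the linear algebra more cleverly and you stop short at exactly the decisive step. The paper does not work with the full $6$-dimensional, $P$-independent space $I(Z)_4$ and a $6\times 6$ matrix: it passes to the intermediate system $I(Z+2P)_4$, checks it is nonspecial of dimension $3$, and exhibits the explicit basis of completely reducible quartics $G_1=L_1L_2M_6M_7$, $G_2=L_1L_3M_2M_4$, $G_3=L_2L_3M_1M_3$, where the $L_i$ are lines of the configuration and $M_j$ is the line $PZ_j$. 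Since each $G_i$ contains two lines through $P$, the double-point conditions at $P$ hold automatically, and the unexpected quartic exists if and only if the $3\times 3$ matrix of second partials $(G_i)_{xx},(G_i)_{xy},(G_i)_{yy}$ evaluated at $P$ is singular --- a determinant whose vanishing can be checked by hand, exploiting that the $G_i$ are products of linear forms with pairwise common factors and that $M_j(P)=0$. Your $6\times 6$ determinant $\det M(P)$ encodes the same obstruction, so your fallback of verifying $\det M(P)\equiv 0$ symbolically would indeed give a valid proof; what the paper's $P$-adapted basis buys is that the vanishing becomes visible structurally rather than by brute force.

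As written, however, there is a genuine gap: the closed-form $Q_P$ you hope to extract from harmonic relations or a de Jonqui\`eres involution is never produced (you say so yourself), the independence of your proposed basis $F_1,\dots,F_6$ is not verified (your cubic factors through the six complementary points must in particular be chosen so that the products are not already in the span of $Cx,Cy,Cz$ --- note $C$ itself passes through all nine points), and the identity $\det M(P)\equiv 0$ is only asserted to be checkable, not checked. A smaller issue: your irreducibility aside handles only the splitting into three lines through $P$ plus a fourth line; the splittings into a line through $P$ plus a cubic with a double point at $P$, and into two conics, need separate (easy) arguments --- the first, for instance, would produce an unexpected cubic for an eight-point subset, contradicting Theorem~\ref{thm:nounexpectedcubics}. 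Irreducibility is not needed for the statement, so that omission is harmless, but the unexecuted determinant verification is the heart of the proof: to complete your argument, either carry out the symbolic $6\times 6$ computation you describe, or adopt the paper's reduction to $I(Z+2P)_4$, which shrinks it to a $3\times 3$ check doable by hand.
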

\begin{proof}
Let $P=[a,b,c]$ be a general point. Up to projective equivalence, we can assume that
\[Z_1=[-1,0,1],\ Z_2=[0,-1,1],\ Z_3=[1,0,1],\ Z_4=[0,1,1].\]
By construction, the remaining points have coordinates
\[Z_5=[0,0,1],\ Z_6=[1,-1,0],\ Z_7=[1,1,0],\ Z_8=[0,1,0],\ Z_9=[1,0,0].\]
Let $L_1$  be the linear form defining the line $Z_1Z_3$, let $L_2$ define $Z_2Z_4$, and let $L_3$ define the line $Z_6Z_7$. Furthermore, for every $j$ define $M_j$ to be the linear form defining the line $PZ_j$.

By using reducible quartics it is easy to see that $I(Z+2P)_4$ is nonspecial.
One can check that
\[G_1=L_1L_2M_6M_7,\ G_2=L_1L_3M_2M_4,\ G_3=L_2L_3M_1M_3\]
are linearly independent and thus form a basis of $I(Z+2P)_4$. Since each $G_i$ is singular at $P$, we have $G_i(P)=(G_i)_x(P)=(G_i)_y(P)=0$ for every $i$. The existence of an unexpected quartic is equivalent to the fact that the three additional conditions that the triple point $P$ imposes on $G_1$, $G_2$, $G_3$ (given by the three second order partials in $x$ and $y$) are linearly dependent. This means that
\[
\det
\begin{pmatrix}
(G_1)_{xx} & (G_2)_{xx} & (G_3)_{xx} \\
(G_1)_{xy} & (G_2)_{xy} & (G_3)_{xy} \\
(G_1)_{yy} & (G_2)_{yy} & (G_3)_{yy}
\end{pmatrix}(P)=0.
\]
This condition can be directly checked 
 by exploiting the facts that $G_1$, $G_2$, $G_3$ are completely reducible with pairwise common factors, and that $M_j(P)=0$ for every $j$.
\end{proof}

\section{Geometric conditions on unexpected quartics}\label{sec: uniqueness proof, geom arg}

We now focus on the proof of Theorem \ref{thm: Main result}. As \cite[Corollary 5.5]{CHMN} suggests, the most significant case is $|Z|=9$. Hence throughout this section $Z$ will indicate a set of nine points, and $P\in\P^2$ a general point. 
If an unexpected quartic exists, then \cite[Theorem 1.2]{CHMN} shows that $Z$ does not contain any subset of five or more collinear points. On the other hand, by \cite[Corollary 6.8]{CHMN}, the points of $Z$ cannot be in linearly general position. In this section we aim to provide further necessary conditions for the sets $Z$ giving rise to unexpected quartics.

For instance, the presence of a 4-rich line imposes a precise behavior on the configuration. The next propositions show how such a line has to intersect the other lines.

\begin{proposition}\label{pro:four and four}
If there are two $4$-rich lines $L_Q,L_R$ such that $L_Q\cap L_R\notin Z$, then $I(Z+3P)_4=0$.
\end{proposition}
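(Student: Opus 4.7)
I plan to argue by contradiction, adapting the degeneration technique of Proposition~\ref{pro:two and four}: specialize $P$ to a generic point on a 2-rich line and apply B\'ezout's Theorem three successive times to derive an impossibility.

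Since $|Z|=9$ and $L_Q\cap L_R\notin Z$, there is a unique ninth point $S\in Z\setminus(L_Q\cup L_R)$. Fix $Q_1\in L_Q\cap Z$ and assume that the line $M:=\overline{SQ_1}$ is 2-rich, i.e.\ $M\cap Z=\{S,Q_1\}$. Specialize $P$ to a generic point $P_0\in M$ and suppose $C\in I(Z+3P_0)_4$ is nonzero. B\'ezout gives $|C\cap M|\ge 3+1+1=5>4$, so $M\subset C$. Writing $C=M\cdot C'$, the residual $C'$ is a cubic of multiplicity $2$ at $P_0$ passing through the seven points $\{Q_2,Q_3,Q_4,R_1,\ldots,R_4\}$. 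The four collinear $R_j$'s exceed $\deg C'\cdot\deg L_R=3$, forcing $L_R\subset C'$; the resulting conic $C''=C'/L_R$ has multiplicity $2$ at $P_0$ (since $P_0\notin L_R$) and passes through $Q_2,Q_3,Q_4$. These three collinear points exceed $\deg C''\cdot\deg L_Q=2$, forcing $L_Q\subset C''$; the final residual line $L'=C''/L_Q$ must then have multiplicity $2$ at $P_0$, which is absurd. Hence $I(Z+3P_0)_4=0$, and by upper semi-continuity of $P\mapsto\dim I(Z+3P)_4$ this vanishing propagates from a nonempty open neighbourhood of $P_0$ to a general $P\in\P^2$.

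The main obstacle is the degenerate case in which every line $\overline{SQ_i}$ meets $Z$ in a third point, necessarily some $R_{\sigma(i)}$, so that no such line is 2-rich. In that scenario I would specialize $P_0$ instead to a generic point of a line $\overline{Q_iR_j}$ with $j\neq\sigma(i)$, which then contains only the two points $Q_i,R_j$ of $Z$. After absorbing this line into $C$, the B\'ezout chain no longer forces $L_Q$ or $L_R$ as residual components; instead, a direct dimension count shows that cubics through the seven remaining points of $Z$ with multiplicity $2$ at $P_0$ form an empty linear system, yielding the same contradiction and hence the vanishing of $I(Z+3P)_4$ for a general $P$.
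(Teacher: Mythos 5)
Your first branch is fine: when some line through $S$ and a point of $Z$ is simple, your B\'ezout chain (peel off $M$, then $L_R$, then $L_Q$, leaving a line with a double point) is correct, and the semicontinuity mechanics are handled properly. In fact this branch is just a re-derivation of Proposition~\ref{pro:two and four} (with $L_R$ as the $4$-rich line and $M$ as the simple line, noting $M\cap L_R\notin Z$ since $M$ is $2$-rich), so you could have cited it directly; that is exactly how the paper opens its proof, concluding that in the remaining case each line $SR_i$ meets $L_Q$ at a point $Q_i$ of $Z$.

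The genuine gap is in your treatment of that remaining (degenerate) case, and it sits precisely at the phrase ``a direct dimension count shows that cubics through the seven remaining points of $Z$ with multiplicity $2$ at $P_0$ form an empty linear system.'' The count $7+\binom{3}{2}=10=\dim \mathbb{C}[x,y,z]_3$ only shows the \emph{expected} dimension is zero; whether the ten conditions are independent is exactly the kind of question this paper is about, and here there is every reason to worry: the seven residual points are highly special (they contain \emph{four} collinear triples --- one on $L_Q$, one on $L_R$, and two more through $S$ coming from the lines $SQ_kR_{\sigma(k)}$), and $P_0$ is not a general point of $\p^2$ but is confined to the special line $\overline{Q_iR_j}$. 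You cannot invoke Theorem~\ref{thm:nounexpectedcubics}, since that rules out a nonzero system only for \emph{general} $P$, not for generic $P_0$ on a prescribed line through two of the points. If the cubic system were nonempty for generic $P_0\in\overline{Q_iR_j}$, you would get quartics $\overline{Q_iR_j}\cup C'\in I(Z+3P_0)_4$ with no contradiction to anything you have established, and your specialization would prove nothing about general $P$. The paper's own handling of this configuration shows the difficulty is real: it introduces coordinates depending on two parameters $(a,b)$, specializes $P$ to intersection points of \emph{pairs} of simple lines (where B\'ezout forces a completely reducible quartic and the triple point forces $P$ onto one of two explicit lines, yielding polynomial constraints on $(a,b)$), and needs \emph{three} successive specializations because the pairs $(a,b)=(-1/2,1/4)$ and $(3/2,1/2)$ survive the first two. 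So some parameter values genuinely defeat partial arguments, and your degenerate case needs an actual proof of emptiness --- uniform in the two-parameter family of such configurations --- not a parameter count.
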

\begin{proof}
Assume by contradiction that $I(Z+3P)_4\neq 0$. By hypothesis there exists a unique point $S\in Z\setminus(L_R\cup L_Q)$. Set $Z\cap L_R=\{R_1,R_2,R_3,R_4\}$. By Proposition \ref{pro:two and four}, for any $i\in\{1,2,3,4\}$ the lines $SR_i$ and $L_Q$ meet at a point of $Z$, say $Q_i$ (see Figure \ref{fig:four and four}).
\begin{figure}[ht]
\centering
\begin{tikzpicture}[line cap=round,line join=round,>=triangle 45,x=1.0cm,y=1.0cm,scale=0.5]
\clip(-7,-3.5) rectangle (6.5,4.);
\draw [line width=0.8pt,domain=-7.:2.5] plot(\x,{(-0.--6.66*\x)/-7.98});
\draw [line width=0.8pt,domain=-1.1:0.85] plot(\x,{(-0.--6.96*\x)/-2.26});
\draw [line width=0.8pt,domain=-1.8:1.65] plot(\x,{(-0.--7.34*\x)/4.66});
\draw [line width=0.8pt,domain=-7.:3.5] plot(\x,{(-0.--5.36*\x)/9.6});
\draw [line width=1.5pt,domain=-7.:7.] plot(\x,{(--21.7984-4.16*\x)/11.5});
\draw [line width=1.5pt,domain=-7.:7.] plot(\x,{(--20.96-4.*\x)/-14.24});
\begin{scriptsize}
\draw [fill=black] (0.,0.) circle (3.0pt);
\draw[color=black] (-0.7,0.05) node {$S$};
\draw[color=black] (-11.12,9.53) node {$f$};
\draw[color=black] (-2.86,9.53) node {$g$};
\draw[color=black] (5.74,9.53) node {$h$};
\draw[color=black] (-12.44,-7.01) node {$i$};
\draw [fill=white] (5.24,0.) circle (4pt);
\draw[color=black] (-16.62,7.77) node {$j$};
\draw[color=black] (-16.62,-5.79) node {$k$};
\draw [fill=black] (-4.008720997760018,3.345624291363624) circle (3.1pt);
\draw[color=black] (-3.5,3.6) node {$R_1$};
\draw [fill=black] (-0.697416476024372,2.1477958730662077) circle (3.1pt);
\draw[color=black] (-0.2,2.4) node {$R_2$};
\draw [fill=black] (0.9786594422114233,1.5414936278609115) circle (3.1pt);
\draw[color=black] (1.7,1.7) node {$R_3$};
\draw [fill=black] (2.060177994801922,1.1502660470977397) circle (3.1pt);
\draw[color=black] (3,1.2) node {$R_4$};
\draw [fill=black] (1.3195243865495305,-1.1012571947894578) circle (3.1pt);
\draw[color=black] (2.1,-1.28) node {$Q_1$};
\draw [fill=black] (0.4379974553954493,-1.3488771192709412) circle (3.1pt);
\draw[color=black] (1.1,-1.7) node {$Q_2$};
\draw [fill=black] (-1.1373053133616513,-1.7913778970116998) circle (3.1pt);
\draw[color=black] (-0.75,-2.2) node {$Q_3$};
\draw [fill=black] (-5.305433682078973,-2.9622004724940934) circle (3.1pt);
\draw[color=black] (-4.7,-3.25) node {$Q_4$};
\draw[color=black] (-5.7,3.5) node {$L_R$};
\draw[color=black] (-6.5,-2.8) node {$L_Q$};
\end{scriptsize}
\end{tikzpicture}
\caption{Two 4-rich lines not intersecting in $Z$.}
\label{fig:four and four}
\end{figure}
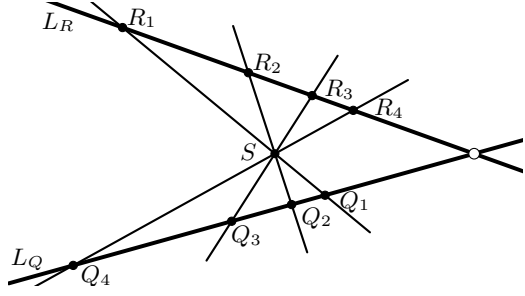
Up to projective equivalence, we assume that
\[
S=[0,0,1],\quad R_1=[1,0,0],\quad R_2=[0,1,0],\quad Q_3=[1,1,1].
\]
This choice of coordinates implies that $L_R$ is the line $z=0$ and that $R_3=[1,1,0]$. Since $Q_4\notin L_R$, $Q_4=[a,b,1]$ for some parameters $a$ and $b$. Therefore $L_Q$ is the line $(1-b)x+(a-1)y+(b-a)z=0$, $Q_1=[a-b,0,1-b]$ with $a\neq b$ and $b\neq 1$, $Q_2=[0,a-b,a-1]$ with $a\neq 1$ and finally $R_4=[a,b,0]$ with $a\neq 0$ and $b\neq 0$.

Now let $D$ be the quartic defined by a non-zero element of $I(Z+3P)_4$. We consider three different specializations of $P$ that put constraints on $a$ and $b$ and we show that there is no choice of $a$ and $b$ that satisfies all the constraints simultaneously.

First of all, observe that the lines $R_1Q_2$ and $R_2Q_1$ are simple. If we specialize $P$ to the point $R_1Q_2\cap R_2Q_1=[(1-a)(a-b),(1-b)(b-a),(1-a)(1-b)]$, then $D$ contains $R_1Q_2$, $R_2Q_1$ and the singular conic $R_3Q_3\cup R_4Q_4$. Moreover, since $D$ has multiplicity 3 at $P$, $P$ must be on either $R_3Q_3$ or $R_4Q_4$, in which case $b=2-a$ or $1/a+1/b=2$ respectively. 

Similarly as before, the lines $R_1Q_3$ and $R_3Q_1$ are simple. If we specialize $P$ to the point $R_1Q_3\cap R_3Q_1=[2b-a-1,b-1,b-1]$, then $D$ contains $R_1Q_3$, $R_3Q_1$ and the singular conic $R_2Q_2\cup R_4Q_4$. The conclusion now is that $P$ must be on either $R_2Q_2$ or $R_4Q_4$. The first case yields the condition $a=2b-1$, whereas the second one gives the condition $b=1/2$.

On one hand, if we assume that $a=2b-1$, then the only one compatible constraint between the two provided by the first specialization of $P$ is $1/a+1/b=2$. This gives the only one possible solution $(a,b)=(-1/2,1/4)$. On the other hand, if we assume that $b=1/2$, from the first specialization of $P$ we must have that $b=2-a$. Then another solution is $(a,b)=(3/2,1/2)$.

Finally, we observe that the lines $R_1Q_4$ and $R_4Q_1$ are simple as well. If we specialize $P$ to the point $R_1Q_4\cap R_4Q_1=[ab-2a+b,b(b-1),b-1]$, then $D$ contains $R_1Q_4$, $R_4Q_1$ and the singular conic $R_2Q_2\cup R_3Q_3$. The conclusion now is that $P$ must be on either $R_2Q_2$ or $R_3Q_3$. Since $R_2Q_2$ and $R_3Q_3$ are the lines $x=0$ and $x-y=0$ respectively, we reach a contradiction either if $(a,b)=(-1/2,1/4)$ or $(a,b)=(3/2,1/2)$.
\end{proof}

The previous result is important because it imposes a tight restriction on the set $Z$ admitting an unexpected quartic. Indeed, there is only one $Z$ having more than one 4-rich line.

\begin{proposition}\label{pro:Brian's example}
Assume that $I(Z+3P)_4\neq0$. If there are two $4$-rich lines $L_Q$, $L_R$, then the configuration of the points of $Z$ is the one described in Example \ref{ex:UnexpectedQuartic}.
\end{proposition}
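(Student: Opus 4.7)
Since $I(Z+3P)_4 \neq 0$, Proposition~\ref{pro:four and four} forces $L_R \cap L_Q = \{A\}$ with $A \in Z$. Write $L_R \cap Z = \{A, R_1, R_2, R_3\}$, $L_Q \cap Z = \{A, Q_1, Q_2, Q_3\}$, and let $S_1, S_2$ denote the remaining two points of $Z$. My plan is first to establish a perspectivity structure at each $S_k$, then to produce a third $4$-rich line $S_1 S_2$, and finally to identify the resulting rigid configuration with the one in Example~\ref{ex:UnexpectedQuartic} by an explicit projective transformation.

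For each $k \in \{1, 2\}$ and each $i \in \{1, 2, 3\}$, the line $S_k R_i$ cannot be simple: by Proposition~\ref{pro:two and four} applied to the $4$-rich line $L_Q$, a simple line must meet $L_Q$ at a point of $Z$, so a simple $S_k R_i$ would carry a $Q_j$ as well, contradicting simplicity. Hence $S_k R_i$ contains a third $Z$-point, which is necessarily some $Q_j$ or $S_{3-k}$ (a point $A$ or $R_l$ with $l \neq i$ would force $S_k \in L_R$). A short combinatorial argument, paralleling the exclusion of the analogous subcase in the proof of Proposition~\ref{pro:four and four}, then produces, for each $k$, a permutation $\sigma_k \in \mathfrak{S}_3$ with $S_k \in R_i Q_{\sigma_k(i)}$ for all $i$.

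Next I would argue that $\sigma_1$ and $\sigma_2$ must agree on some index. They cannot be equal, for otherwise the three lines $R_i Q_{\sigma_1(i)}$ would all pass through both $S_1$ and $S_2$ and so coincide, forcing $L_R \subseteq S_1 S_2$. If instead $\sigma_2 \sigma_1^{-1}$ were a derangement, hence a $3$-cycle in $\mathfrak{S}_3$, then no line $R_i Q_j$ would contain both $S_1$ and $S_2$; since $S_1 S_2$ still cannot be simple, it would have to pass through $A$. Working in coordinates $A = [0,0,1]$, $L_R = \{y=0\}$, $L_Q = \{x=0\}$ with $R_i = [r_i, 0, 1]$, $Q_j = [0, q_j, 1]$, $S_k = [a_k, b_k, 1]$, the collinearity with $A$ lets me write $a_k = t b_k$ for a common slope $t$, and the perspectivity equations become $t/r_i + 1/q_{\sigma_k(i)} = 1/b_k$. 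Telescoping the cyclic differences $1/q_{\sigma_2(i)} - 1/q_{\sigma_1(i)} = 1/b_2 - 1/b_1$ around the $3$-cycle forces $b_1 = b_2$ and hence $S_1 = S_2$, a contradiction.

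Therefore there is an index $i_0$ with $\sigma_1(i_0) = \sigma_2(i_0) =: j_0$, and the line $M := R_{i_0} Q_{j_0}$ equals $S_1 S_2$ and is a third $4$-rich line. The three $4$-rich lines $L_R, L_Q, M$ form a triangle whose vertices $A$, $R_{i_0}$, $Q_{j_0}$ all lie in $Z$. Normalizing the triangle to $\{[1,0,0], [0,1,0], [0,0,1]\}$ by a projective transformation, a direct cross-ratio computation from the perspectivity equations shows that on each side the two non-vertex $Z$-points are harmonic conjugates of the two vertices; in normalized coordinates these are exactly the relations $1/r_2 + 1/r_3 = 2$, $1/q_2 + 1/q_3 = 2$, and $(R_{i_0}, Q_{j_0}; S_1, S_2) = -1$. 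The harmonic condition then pins the six non-vertex points to $\{[\pm 1, 0, 1], [0, \pm 1, 1], [1, \pm 1, 0]\}$, which is precisely the configuration of Example~\ref{ex:UnexpectedQuartic}. The main obstacle is the case analysis producing the permutations $\sigma_k$ together with the derangement computation; once these are in place, the harmonic-conjugation argument and the final projective normalization are routine.
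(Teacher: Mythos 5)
Your proposal is correct, and at its core it is the paper's proof in different clothes, though your bookkeeping is genuinely different and in places cleaner. The paper first excludes $B=L_Q\cap L_R\in L_S$ (your $A$) by observing that the two cubics $\bigcup_j S_1R_j$ and $\bigcup_j S_2R_j$ would cut the same divisor on $L_Q$, and summing the three resulting root equations forces $s_1=s_2$; that is literally your telescoping identity, since summing $1/q_{\sigma_2(i)}-1/q_{\sigma_1(i)}=1/b_2-1/b_1$ over all three indices gives $3(1/b_2-1/b_1)=0$ (note you do not even need the $3$-cycle structure: the full sum over $i$ telescopes for any pair of permutations). So your derangement case is the paper's $B\in L_S$ case. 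Where you differ is the organization: your trichotomy on $\sigma_2\sigma_1^{-1}$ (identity, derangement, transposition) makes the third $4$-rich line $R_{i_0}Q_{j_0}=S_1S_2$ emerge automatically, whereas the paper relabels so that $Q_2,R_2\in L_S$ and then chases the remaining incidences with four more applications of Proposition \ref{pro:two and four}; and your endgame is more explicit than the paper's, which simply asserts projective equivalence to Example \ref{ex:UnexpectedQuartic} once the incidence pattern is fixed. Two caveats. First, your deferred ``short combinatorial argument'' is a real step but the reference is misplaced: the proof of Proposition \ref{pro:four and four} contains no analogous subcase. What must be excluded is that $L_S$ meets $L_R$ at some $R_{i^*}$ while containing no $Q_j$; this follows from the dual count on the lines $S_kQ_j$ (each must carry some $R_i$, bijectively, so $S_kR_{i^*}=L_S$ would carry a $Q_j$, a contradiction). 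To be fair, the paper is equally breezy here --- it invokes Proposition \ref{pro:two and four} to conclude that $L_S$ meets both $4$-rich lines in $Z$, although that proposition as stated applies only to simple lines and the mixed case needs this same dual argument. Second, your displayed harmonic relations $1/r_2+1/r_3=2$ belong to a normalization placing the two vertices at parameters $0$ and $1$, not to the stated triangle $\{[1,0,0],[0,1,0],[0,0,1]\}$, where they read $r_2=-r_3$, $q_2=-q_3$, $s_1=-s_2$; moreover harmonicity alone leaves one free scale per side, so you must also use the incidence equations (in those coordinates, $q_{\sigma_k(i)}=-s_kr_i$, which indeed force $r_2=-r_3$, $s_1=-s_2$, $q_2=-q_3$) together with a final diagonal transformation to land exactly on the coordinates of Example \ref{ex:UnexpectedQuartic}. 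With those two points spelled out, your argument is complete.
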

\begin{proof}
By Proposition \ref{pro:four and four}, the $4$-rich lines $L_Q$ and $L_R$ meet at a point of $Z$. Then we can suppose that $L_R\cap Z=\{R_1,R_2,R_3,B\}$ and that $L_Q\cap Z=\{Q_1,Q_2,Q_3,B\}$. Let $\{S_1,S_2\}=Z \setminus (L_R \cup L_Q)$ and $L_S$ be the line containing $S_1$ and $S_2$. By Proposition \ref{pro:two and four}, $L_S$ meets $L_Q$ and $L_R$ at a point of $Z$.

Assume by contradiction that $B\in L_S$. Let $L_{ij}$ be the line joining $S_i$ and $R_j$ for $i\in\{1,2\}$ and $j\in\{1,2,3\}$. By Proposition \ref{pro:two and four}, each line $L_{ij}$ meets $L_Q$ at a point of $Z$. We show that the two cubics $C_1=L_{11}\cup L_{12}\cup L_{13}$ and $C_2=L_{21}\cup L_{22}\cup L_{23}$ never coincide when restricted to the line $L_Q$. This implies that one of the $L_{ij}$ meets $L_Q$ outside $Z$, hence contradicting Lemma \ref{pro:two and four}. Assume that $B=[1,0,0]$ and that the equations of $L_Q$, $L_R$ and $L_S$ are respectively $z=0$, $y=0$ and $y-z=0$. In particular, $S_i=[s_i,1,1]$ and $R_j=[r_j,0,1]$ for some parameters $s_i$ and $r_j$ for $i\in\{1,2\}$ and $j\in\{1,2,3\}$. With these assumptions, we obtain that $L_{ij}$ is defined by the linear form $l_{ij}=x+(r_j-s_i)y-r_jz=0$ for all $i$ and $j$. With a bit of work, one can see that $l_{11}l_{12}l_{13}$ and $l_{21}l_{22}l_{23}$ have the same roots on $L_Q$ if and only if $s_1=s_2$, and the latter condition is impossible since $S_1\neq S_2$.

The above argument implies that $B\notin L_S$. Up to relabeling, we assume that $Q_2\in L_S$ and $R_2 \in L_S$. Let $M_1$ be the line containing $R_1$ and $S_2$ and let $M_3$ be the line containing $R_3$ and $S_2$. By Proposition \ref{pro:two and four}, $M_1$ and $M_3$ meet $L_Q$ at a point of $Z$, and up to relabeling we assume that $Q_1\in M_1$ and that $Q_3\in M_3$. Now consider the line $N_1$ joining $S_1$ and $R_1$ and the line $N_3$ joining $S_1$ and $R_3$. Again by Proposition \ref{pro:two and four}, $N_1$ and $N_3$ meet $L_Q$ at a point of $Z$. In particular, $Q_i\notin N_i$ for $i\in\{1,3\}$ because $S_1 \neq S_2$. Moreover, $Q_2\notin N_i$ because $R_2\neq R_i$ for $i\in\{1,3\}$. Therefore, the only possibility is that $Q_3 \in N_1$ and that $Q_1 \in N_3$. Hence the obtained configuration is projectively equivalent to the one described in Example \ref{ex:UnexpectedQuartic}.\qedhere
\end{proof}

The following property of 4-rich lines is a further step toward the proof of uniqueness.

\begin{proposition}\label{pro:three and four}
Assume that there is exactly one 4-rich line $L_R$. If there is a 3-rich line $L_Q$ such that $L_Q\cap L_R\notin Z$, then $I(Z+3P)_4=0$.
\end{proposition}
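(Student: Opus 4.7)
Suppose for contradiction that $I(Z+3P)_4$ is nonzero and fix a nonzero quartic $D$ in it. Write $L_R \cap Z = \{R_1,R_2,R_3,R_4\}$, $L_Q \cap Z = \{Q_1,Q_2,Q_3\}$ and $\{S_1,S_2\} = Z \setminus (L_R \cup L_Q)$. Following the template of Proposition~\ref{pro:four and four}, the plan is to first derive a list of forced incidences from Proposition~\ref{pro:two and four}, and then to specialize $P$ to intersections of suitable simple lines so that these lines must appear as components of $D$ by B\'ezout's theorem, reducing the problem to the study of a residual conic factor that is forced to satisfy too many conditions.

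The first step is an incidence analysis. Because $L_R$ is $4$-rich, any simple line whose two $Z$-points both lie outside $L_R$ would meet $L_R$ off $Z$, contradicting Proposition~\ref{pro:two and four}; hence every such line must in fact carry a third $Z$-point. Applied to the six lines $S_a Q_j$ (for $a \in \{1,2\}$ and $j \in \{1,2,3\}$) and to the line $S_1 S_2$, this forces each of them to contain a third point of $Z$. Since $S_a \notin L_Q$ and $L_R$ is the \emph{unique} $4$-rich line, the only candidates for this third point are the other $S$ or one of the $R_k$'s; in particular each of those seven lines is exactly $3$-rich. The resulting constraints produce matchings between the $Q_j$'s and subsets of the $R_k$'s via projection from each $S_a$, so only a short list of combinatorial patterns for $Z$ survives.

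The second step is to specialize $P$ to an intersection $R_i Q_j \cap R_{i'} Q_{j'}$ with $i \neq i'$, $j \neq j'$ and both lines simple; such pairs exist because the forced $3$-rich lines identified above use up only a proper subset of the six candidate lines $R_i Q_j$. Each of these two lines then meets $D$ in $5 > 4$ points counted with multiplicities at $P$, so both lie in $D$, and $D = (R_i Q_j)(R_{i'} Q_{j'}) \cdot C$ for some conic $C$. The conic $C$ passes through the five remaining points of $Z$ and, since $D$ has a triple point at $P$ while the two line factors contribute multiplicity only $2$ there, through $P$ as well; this is six conditions on a five-dimensional linear system. One then splits on the shape of $C$: either $C$ contains $L_R$, in which case its complementary line $\ell$ forces the four points $Q_{j''},S_1,S_2,P$ to be collinear, or $C$ is irreducible (resp.\ splits with the two remaining $R$'s on distinct factors), yielding an explicit polynomial identity in the coordinates of $Z$. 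Varying the choice of $(i,j,i',j')$ and combining with the specialization of $P$ onto $L_Q$ --- which factors $D$ as $L_R \cdot L_Q \cdot \ell_1 \ell_2$ with $\ell_1 \cap \ell_2 = P$ and $\{S_1,S_2\} \subset \ell_1 \cup \ell_2$ --- should yield a system of constraints with no solution over $\mathbb{C}$ unless a second $4$-rich line exists, contradicting the hypothesis. The main obstacle is the bookkeeping required by the combinatorial case analysis: each pattern for the third-point distributions on the $3$-rich lines $S_a Q_j$ and $S_1 S_2$ calls for its own specialization and algebraic computation, and one must verify in every subcase that a suitable pair of simple lines can be chosen to launch the argument and that the emerging constraints are genuinely incompatible.
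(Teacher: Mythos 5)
Your outline follows the same route as the paper's proof: use Proposition~\ref{pro:two and four} to force every line $S_aQ_j$ and the line $S_1S_2$ to pick up a third point of $Z$, reduce to a short list of incidence patterns, and then specialize $P$ to intersections of simple lines so that $D$ splits off two lines and a residual conic, which the collinearities among the five leftover points force to be a specific singular conic, with the triple point at $P$ then yielding algebraic constraints. However, there is a genuine gap: the decisive step is deferred to the assertion that the constraints ``should yield a system with no solution over $\mathbb{C}$ unless a second $4$-rich line exists,'' and this is exactly the content of the proposition, not something that can be waved at. The danger is concrete rather than mere bookkeeping. In the surviving pattern where $S_1S_2$ passes through $Q_3$ and projection from $S_1,S_2$ matches $Q_1,Q_2$ to $\{R_1,R_2\}$ crosswise, one can normalize $R_3=[1,a,0]$, $R_4=[1,b,0]$, and the specialization constraints are \emph{not} inconsistent by themselves: the choice $\{a,b\}=\{-1,1\}$ satisfies them all, and it is precisely the configuration of Example~\ref{ex:UnexpectedQuartic}, which does admit a nonzero element of $I(Z+3P)_4$. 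The contradiction only materializes after explicitly invoking the hypothesis that $L_R$ is the \emph{unique} $4$-rich line to exclude that solution, and then verifying by hand that the remaining polynomial conditions produced by three successive specializations (first $ab=1$, then $a^2+1=0$ or $a^2-a-1=0$, then $a=-1/2$ or $a=-2$) share no common root. Without carrying this out, the argument is circular at its key point.

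Two further places where your plan underestimates the case analysis. First, the branch in which $S_1S_2$ meets $L_R$ (rather than $L_Q$) at a point of $Z$ does not fit your ``matching between the $Q_j$'s and the $R_k$'s followed by a pair of simple lines $R_iQ_j$, $R_{i'}Q_{j'}$'' template cleanly: there the incidence structure alone already forces an algebraic relation on the normalized coordinates ($a^2-a+1=0$ in the paper's normalization) \emph{before} any specialization, and a single specialization of $P$ then produces conditions incompatible with it; one must notice and exploit that preliminary relation. Second, in two of the subcases of the other branch no coordinates are needed at all: specializing $P$ successively to the two points $X_1=Q_1R_3\cap Q_2R_2$ and $X_2=Q_1R_4\cap Q_2R_2$ forces the residual conic to be $L_R\cup L_S$ both times, hence $X_1,X_2\in L_S$, hence the simple line $Q_2R_2$ would coincide with $L_S$ --- an immediate contradiction. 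Your uniform ``explicit polynomial identity'' plan would eventually work in those subcases too, but recognizing this double-specialization trick is what keeps the proof manageable; as written, your proposal neither enumerates the subcases nor supplies, for any of them, the verification that a suitable pair of simple lines exists and that the resulting constraints are genuinely incompatible.
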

\begin{proof}
Assume by contradiction that $I(Z+3P)_4\neq 0$. Suppose that $L_R\cap Z=\{R_1,R_2,R_3,R_4\}$ and that $L_Q\cap Z=\{Q_1,Q_2,Q_3\}$. By hypothesis there are only two points $S_1$, $S_2$ in $Z \setminus (L_R\cup L_Q)$. Moreover, by Proposition \ref{pro:two and four}, $L_S=S_1S_2$ must meet either $L_R$ or $L_Q$ at a point of $Z$.

Suppose that $L_S$ meets $L_R$ at a point of $Z$. If, in turn, $L_S$ meets $L_Q$ at a point of $Z$, then a 4-rich line distinct from $L_R$ appears, which is not allowed by hypothesis. Hence $L_S\cap L_Q\cap Z=\emptyset$ and we assume that $R_4\in L_S$. By Proposition \ref{pro:two and four}, the line $Q_iS_1$ meets $L_R$ at a point of $Z$ for $i\in\{1,2,3\}$. Moreover, $Q_iS_1\cap L_R\neq Q_jS_1\cap L_R$ for $i\neq j$ and $Q_iS_1\cap L_R$ is distinct from $R_4$ for $i\in\{1,2,3\}$. Therefore, we may assume that $Q_iS_1$ contains $R_i$ for all $i\in\{1,2,3\}$. Similarly the line $Q_iS_2$ meets $L_R$ at a point of $Z$ distinct from $R_4$ for $i\in\{1,2,3\}$. Suppose that $R_3\in Q_1S_2$ (the proof is similar if we consider $R_2\in Q_1S_2$). Consequently, $R_1\in Q_2S_2$ and $R_2\in Q_3S_2$. Up to projective equivalence, we assume that
\[
R_1=[0,0,1],\quad R_2=[0,1,0],\quad Q_3=[1,0,0],\quad S_1=[1,1,1].
\]
This choice of coordinates implies that $L_R$ and $Q_3S_1$ are the lines $x=0$ and $y-z=0$ respectively. Besides that, $R_3=[0,1,1]$. Since $S_2\in R_2Q_3$ and $R_2Q_3$ has equation $z=0$, $S_2=[1,a,0]$ for some $a\neq 0$. After little computation one verifies that $Q_2=[1,a,1]$, $Q_1=[1,1,1-a]$, $R_4=[0,1-a,1]$ for some $a\notin\{0,1\}$, and that $L_Q$ has equation $y-az=0$. Since $Q_1\in L_Q$ as well, we get the relation $a^2-a+1=0$. Now let $D$ be the quartic defined by a non-zero element of $I(Z+3P)_4$. Observe that the lines $R_2Q_1$ and $R_4Q_2$ are simple. If we specialize $P$ to the point $R_2Q_1 \cap R_4Q_2=[1,a^2,1-a]$, then $D$ contains $R_2Q_1$, $R_4Q_2$ and the singular conic $R_1S_2\cup R_3Q_3$. Since $D$ has multiplicity 3 at $P$, $P$ must be on either $R_1S_2$ or $R_3Q_3$. On one hand, if $P$ is on the line $R_1S_2$ of equation $y-ax=0$, then necessarily $a\in\{0,1\}$, a contradiction. On the other hand, if $P$ lies on the line $R_3Q_3$ of equation $y-z=0$, then necessarily $a^2+a-1=0$. This relation, combined with the relation $a^2-a+1=0$ obtained before, implies that $a=1$, again a contradiction.

Now suppose that $L_S$ meets $L_Q$ at a point of $Z$. In particular, suppose that $L_S$ contains the point $Q_3$. By Proposition \ref{pro:two and four}, the lines $Q_1S_1$ and	$Q_1S_2$ are not simple, hence up to labeling we assume that $Q_1S_1\cap L_R=R_1$ and $Q_1S_2\cap L_R=R_2$. Regarding the lines $Q_2S_1$ and $Q_2S_2$, we have three cases to consider:
\begin{figure}[ht]
\centering
\begin{tikzpicture}[line cap=round,line join=round,>=triangle 45,x=1.0cm,y=1.0cm,scale=0.57]
\clip(-2.5,-1.8) rectangle (6.,4.);
\draw [line width=1.5pt,domain=-3.:6.] plot(\x,{(-44.86361430994704--9.845934779981848*\x)/-7.89577881928462});
\draw [line width=1.5pt,domain=-3.:5.5] plot(\x,{(-0.-0.*\x)/-16.55656220688776});
\draw [line width=0.8pt,domain=-3.:4.] plot(\x,{(--2.081740768163536--1.040870384081768*\x)/4.731819148892283});
\draw [line width=0.8pt,domain=-3.:6.] plot(\x,{(-1.4414008302389505-0.7207004151194752*\x)/4.21720296036158});
\draw [line width=0.8pt,domain=2.05:3.1] plot(\x,{(--4.27664409157679-1.7615707992012433*\x)/-0.5146161885307032});
\draw [line width=0.8pt,domain=1.7:4.5] plot(\x,{(--2.2826482029236974-0.7207004151194752*\x)/-0.950060655748989});
\draw [line width=0.8pt,domain=-3.:3.8] plot(\x,{(-3.296710896589217--1.040870384081768*\x)/-0.4354444672182858});
\begin{scriptsize}
\draw[color=black] (-3.6415044167571677,10.49199461575136) node {$f$};
\draw[color=black] (-14.369272654589473,0.3382213574340811) node {$g$};
\draw [fill=black] (2.731819148892283,1.040870384081768) circle (2.5pt);
\draw[color=black] (2.2,1.3) node {$S_2$};
\draw [fill=black] (-2.,0.) circle (2.5pt);
\draw[color=black] (-1.8700371523918697,0.5262541955510677) node {$Q_1$};
\draw[color=black] (-14.369272654589473,-2.373410097516147) node {$i$};
\draw [fill=black] (2.21720296036158,-0.7207004151194752) circle (2.5pt);
\draw[color=black] (2.4,-1.1) node {$S_1$};
\draw[color=black] (-14.369272654589473,2.456064902541194) node {$h$};
\draw[color=black] (5.225728370233373,10.49199461575136) node {$l$};
\draw [fill=black] (5.597817290090197,-1.2984317156270906) circle (2.5pt);
\draw[color=black] (5.7,-0.8394579970880984) node {$R_1$};
\draw [fill=black] (3.5733969763352276,1.225994415436665) circle (2.5pt);
\draw[color=black] (3.7,1.7) node {$R_2$};
\draw [fill=black] (3.167263616110569,0.) circle (2.5pt);
\draw[color=black] (3.7,-0.3) node {$Q_2$};
\draw[color=black] (16.68583503020552,10.49199461575136) node {$j$};
\draw[color=black] (-1.0090446831193507,10.49199461575136) node {$k$};
\draw [fill=black] (4.031076328531085,0.6552741413506407) circle (2.5pt);
\draw[color=black] (4.6,0.6) node {$R_3$};
\draw [fill=black] (1.652068619846881,3.6218661998273936) circle (2.5pt);
\draw[color=black] (1.3,3.5545725357509577) node {$R_4$};
\draw [fill=black] (2.4277446546661463,0.) circle (2.5pt);
\draw[color=black] (2.108341843557011,0.4470824742386523) node {$Q_3$};
\end{scriptsize}
\end{tikzpicture}
\hspace{0.5cm}
\begin{tikzpicture}[line cap=round,line join=round,>=triangle 45,x=1.0cm,y=1.0cm,scale=0.57]
\clip(-2.5,-1.8) rectangle (6.,4.5);
\draw [line width=1.5pt,domain=-3.:6.] plot(\x,{(-44.86361430994704--9.845934779981848*\x)/-7.89577881928462});
\draw [line width=1.5pt,domain=-3.:5.5] plot(\x,{(-0.-0.*\x)/-16.55656220688776});
\draw [line width=0.8pt,domain=-3.:4.2] plot(\x,{(--1.7254680222576666--0.8627340111288333*\x)/4.47451105462693});
\draw [line width=0.8pt,domain=-3.:6.] plot(\x,{(-1.4414008302389505-0.7207004151194752*\x)/4.21720296036158});
\draw [line width=0.8pt,domain=1.8:4.] plot(\x,{(-5.078703779568466--1.8155840151563831*\x)/1.4613360895650032});
\draw [line width=0.8pt,domain=-3.:3.35] plot(\x,{(--2.413311885599999-0.8627340111288333*\x)/0.32277275993177934});
\draw [line width=0.8pt,domain=2.1:2.8] plot(\x,{(--3.6962375477668283-1.5834344262483087*\x)/-0.2573080942653503});
\begin{scriptsize}
\draw[color=black] (-3.6415044167571677,10.49199461575136) node {$f$};
\draw[color=black] (-14.369272654589473,0.3382213574340811) node {$g$};
\draw [fill=black] (2.47451105462693,0.8627340111288333) circle (2.5pt);
\draw[color=black] (2,1.1) node {$S_2$};
\draw [fill=black] (-2.,0.) circle (2.5pt);
\draw[color=black] (-1.9887947343604928,0.5064612652229639) node {$Q_1$};
\draw[color=black] (-14.369272654589473,-2.0369302819383814) node {$i$};
\draw [fill=black] (2.2002938996886905,2.938236316873201) circle (2.5pt);
\draw[color=black] (2.4052357984785693,3.336850302141815) node {$R_4$};
\draw [fill=black] (2.21720296036158,-0.7207004151194752) circle (2.5pt);
\draw[color=black] (2.5,-1.1) node {$S_1$};
\draw[color=black] (-14.369272654589473,2.456064902541194) node {$h$};
\draw [fill=black] (5.597817290090197,-1.2984317156270906) circle (2.5pt);
\draw[color=black] (5.730448093600022,-0.879043857744306) node {$R_1$};
\draw [fill=black] (3.678539049926583,1.094883600036908) circle (2.5pt);
\draw[color=black] (4.3,1.4) node {$R_2$};
\draw[color=black] (10.9656781653835,10.49199461575136) node {$j$};
\draw [fill=black] (2.7972838145587096,0.) circle (2.5pt);
\draw[color=black] (3.3,-0.3) node {$Q_2$};
\draw[color=black] (-0.9298729618069352,10.49199461575136) node {$k$};
\draw [fill=black] (1.2587446054073208,4.112336194264677) circle (2.5pt);
\draw[color=black] (0.9603518845269856,3.950431142313035) node {$R_3$};
\draw[color=black] (3.7808444562817893,10.49199461575136) node {$l$};
\draw [fill=black] (2.3343167778184943,0.) circle (2.5pt);
\draw[color=black] (1.9895842615883879,0.35) node {$Q_3$};
\end{scriptsize}
\end{tikzpicture}
\hspace{0.5cm}
\begin{tikzpicture}[line cap=round,line join=round,>=triangle 45,x=1.0cm,y=1.0cm,scale=0.57]
\clip(-2.5,-3.3) rectangle (7.2,2.5);
\draw [line width=1.5pt,domain=-3.:7.] plot(\x,{(-44.86361430994704--9.845934779981848*\x)/-7.89577881928462});
\draw [line width=1.5pt,domain=-3.:5.5] plot(\x,{(-0.-0.*\x)/-16.55656220688776});
\draw [line width=0.8pt,domain=-3.:4.5] plot(\x,{(--1.2108518337269663--0.6054259168634831*\x)/4.731819148892283});
\draw [line width=0.8pt,domain=2.3:7.] plot(\x,{(--11.784109180800417-3.410686196834817*\x)/4.074373509155029});
\draw [line width=0.8pt,domain=2.15:4.25] plot(\x,{(--2.628717238109572-0.7608321903542544*\x)/-0.4913706288773261});
\draw [line width=0.8pt,domain=-3.:7.] plot(\x,{(-5.6105205599426675-2.8052602799713338*\x)/8.806192658047312});
\draw [line width=0.8pt,domain=2.45:3.] plot(\x,{(-5.46542866772747--2.0466541350735996*\x)/0.2075568391210738});
\begin{scriptsize}
\draw[color=black] (-3.6415044167571677,10.49199461575136) node {$f$};
\draw[color=black] (-14.369272654589473,0.3382213574340811) node {$g$};
\draw [fill=black] (2.731819148892283,0.6054259168634831) circle (2.5pt);
\draw[color=black] (3.1,0.9715951279334045) node {$S_2$};
\draw [fill=black] (-2.,0.) circle (2.5pt);
\draw[color=black] (-1.8898300827199737,0.5262541955510677) node {$Q_1$};
\draw[color=black] (-14.369272654589473,-1.2452130688142276) node {$i$};
\draw [fill=black] (3.946425858448519,0.7608321903542544) circle (2.5pt);
\draw[color=black] (4.55,1.1) node {$R_2$};
\draw [fill=black] (6.806192658047312,-2.8052602799713338) circle (2.5pt);
\draw[color=black] (6.9,-2.4) node {$R_1$};
\draw[color=black] (-8.84704509304849,10.49199461575136) node {$h$};
\draw [fill=black] (3.455055229571193,0.) circle (2.5pt);
\draw[color=black] (3.6,-0.5) node {$Q_2$};
\draw[color=black] (9.956238718650201,10.49199461575136) node {$j$};
\draw[color=black] (-14.369272654589473,3.782191234524152) node {$k$};
\draw [fill=black] (2.5242623097712094,-1.4412282182101164) circle (2.5pt);
\draw[color=black] (2.75,-1.9) node {$S_1$};
\draw[color=black] (3.4839505013602308,10.49199461575136) node {$l$};
\draw [fill=black] (2.670421237309316,0.) circle (2.5pt);
\draw[color=black] (2.266685286181842,-0.35) node {$Q_3$};
\draw [fill=black] (5.373359412606985,-1.0185356251799798) circle (2.5pt);
\draw[color=black] (5.552311720647087,-0.6019428331508521) node {$R_3$};
\draw [fill=black] (3.342855034530084,1.5134772559539074) circle (2.5pt);
\draw[color=black] (3.5334328271804907,1.85) node {$R_4$};
\end{scriptsize}
\end{tikzpicture}
\caption{The cases (1), (2) and (3) of Proposition \ref{pro:three and four}.}\label{fig:three and four}
\end{figure}
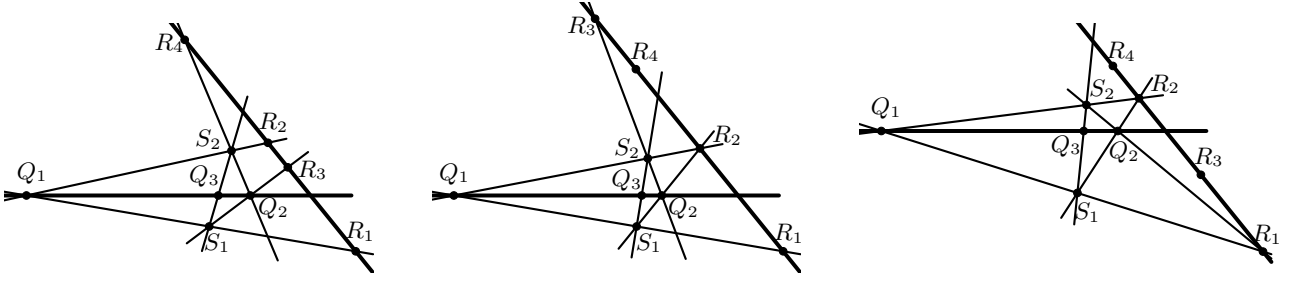
\begin{enumerate}
\item $Q_2S_1 \cap L_R=R_3$ and $Q_2S_2 \cap L_R=R_4$. Observe that the lines $Q_1R_3$, $Q_1R_4$ and $Q_2R_2$ are simple. Let $X_1=Q_1R_3\cap Q_2R_2$ and $X_2=Q_1R_4\cap Q_2R_2$. On one hand, we can specialize $P$ to $X_1$. In order for $D$ to exist, we need a conic having $L_R$ and $L_S$ as components. Noting that $X_1\notin L_R$, we have that $X_1\in L_S$. On the other hand, we can specialize $P$ to $X_2$. In order for $D$ to exist, we need a conic having $L_R$ and $L_S$ as components and, similarly as before, $X_2\in L_S$. Hence $Q_2R_2$ and $L_S$ should coincide, a contradiction.
\item $Q_2S_1 \cap L_R=R_2$ and $Q_2S_2 \cap L_R=R_3$. Note that the lines $Q_1R_3$, $Q_1R_4$ and $Q_2R_1$ are simple. Now repeat the argument from case $(1)$.
\item $Q_2S_1 \cap L_R=R_2$ and $Q_2S_2 \cap L_R=R_1$.
Up to projective equivalence, we assume that
\[
R_1=[1,0,0],\quad R_2=[0,1,0],\quad S_1=[0,0,1],\quad S_2=[1,1,1].
\]
This choice of coordinates implies that $L_R$ and $L_S$ are the lines $z=0$ and $x-y=0$ respectively. Moreover, we obtain that $Q_1=[1,0,1]$, $Q_2=[0,1,1]$. Therefore, $L_Q$ is the line $x+y-z=0$ and $Q_3=[1,1,2]$. Now we assume that $R_3=[1,a,0]$ and $R_4=[1,b,0]$ for some parameters $a$ and $b$ such that $a\neq 0$, $b\neq 0$ and $a\neq b$. Moreover, we exclude the case $\{a,b\}=\{-1,1\}$, which is not allowed by hypothesis and in particular coincides with the configuration of Example \ref{ex:UnexpectedQuartic}. Let $D$ be the quartic defined by a non-zero element of $I(Z+3P)_4$. We follow the same argument used in Proposition \ref{pro:four and four}. First of all, observe that, with the given constraints on the parameters $a$ and $b$, the lines $R_3S_1$, $R_3Q_2$, $R_3Q_3$ and $R_4S_2$ are simple.

If we specialize $P$ to the point $Y_1=R_3S_1\cap R_4S_2=[b-1,a(b-1),b-a]$, then $D$ contains $R_3S_1$, $R_4S_2$ and the singular conic $L_Q\cup L_R$. Moreover, $Y_1$ must be on either $L_R$ or $L_Q$, in which case $a=b$ (impossible by our assumption) or $ab=1$ respectively. Hence, we can rewrite $R_4=[a,1,0]$.

If we specialize $P$ to the point $Y_2=R_3Q_2\cap R_4S_2=[1,1+a-a^2,1-a^2]$, then $D$ contains $R_3Q_2$, $R_4S_2$ and the singular conic $Q_3R_2\cup Q_1R_1$. The conclusion now is that $Y_2$ must belong either to $Q_3R_2:2x-z=0$ or to $Q_1R_1:y=0$. The first case yields the condition $a^2+1=0$, whereas the second one gives the condition $a^2-a-1=0$.

Finally, if we specialize $P$ to the point $Y_3=R_3Q_3\cap R_4S_2=[a+2,2a+1,2(a+1)]$, then $D$ contains $R_3Q_3$, $R_4S_2$ and the singular conic $Q_1R_1\cup Q_2R_2$. Hence $Y_3$ must be on either $Q_1R_1:y=0$ or $Q_2R_2:x=0$, so either $a=-1/2$ or $a=-2$. Since neither $a=-1/2$ nor $a=-2$ is a root of either of the polynomials $a^2+1$ or $a^2-a-1$, we have that the initial constraints on $a$ and $b$ prevent the configuration from admitting an unexpected quartic.\qedhere
\end{enumerate}
\end{proof}

\begin{proposition}\label{pro:only one with four}
If $Z$ admits exactly one $4$-rich line $L$, then $I(Z+3P)_4=0$.
\end{proposition}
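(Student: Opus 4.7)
The plan is to argue by contradiction via a double counting of the pairs of points of $Z\setminus L$. Assume that $I(Z+3P)_4\neq 0$. Write $L\cap Z=\{R_1,R_2,R_3,R_4\}$ and $Z\setminus L=\{S_1,\dots,S_5\}$. The two input facts I will use are: any simple line must meet $L$ at a point of $Z$ (Proposition~\ref{pro:two and four} applied to the unique $4$-rich line $L$), and any $3$-rich line must meet $L$ at a point of $Z$ (Proposition~\ref{pro:three and four}, which relies on uniqueness of $L$). Hence every line spanned by two of the $S_i$'s is forced to pass through one of the $R_k$'s.

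Next I would show that no three of the points $S_i$ are collinear. Otherwise, if $S_i,S_j,S_k$ were collinear along a line $M$, then $M$ is $3$-rich (among $Z$), so by the previous step $M$ also contains some $R_\ell$; but then $M$ would be a second $4$-rich line, contradicting the uniqueness of $L$. Consequently each line $S_iS_j$ contains exactly two of the $S$'s and exactly one of the $R$'s. Furthermore, for a fixed $R_k$, any two lines $S_iS_j$ and $S_iS_m$ through $R_k$ sharing $S_i$ must coincide (as both are the unique line through $R_k$ and $S_i$), which again would force three $S$'s to be collinear. Thus, for each $R_k$, the pairs $\{S_i,S_j\}$ for which $R_k\in S_iS_j$ are pairwise disjoint subsets of $\{S_1,\dots,S_5\}$.

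The double count is now immediate: the $R_k$'s must collectively cover all $\binom{5}{2}=10$ pairs from $\{S_1,\dots,S_5\}$, while each $R_k$ covers at most $\lfloor 5/2\rfloor=2$ disjoint pairs. This gives at most $4\cdot 2=8$ covered pairs, contradicting the required $10$. The main (and really the only) delicate point is verifying that the no-three-collinear conclusion forces the disjointness of the pairs covered by a given $R_k$; once that is in place the counting closes the argument without needing any further case analysis of the sort used in Propositions~\ref{pro:four and four} and \ref{pro:three and four}.
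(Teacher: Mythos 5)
Your proof is correct, and it closes the argument by a genuinely different route than the paper, while using the same two inputs. Both arguments first establish that no three of the five points off $L$ are collinear and that every line joining two of them must contain exactly one point of $L\cap Z$ (via Propositions \ref{pro:two and four} and \ref{pro:three and four}; note that your streamlined contradiction for three collinear points --- a second $4$-rich line violating the hypothesis directly --- lets you bypass the paper's appeal to Proposition \ref{pro:Brian's example} entirely). From there the paper argues locally: it fixes a point $Z_1\in L$, uses the parity of $|Z\setminus L|=5$ to find a point $R_1$ off $L$ whose line to $Z_1$ contains no other point off $L$, and then shows the four lines $R_1R_j$ would have to pass one apiece through the three remaining points of $L\cap Z$, leaving a simple line meeting $L$ outside $Z$, which kills $I(Z+3P)_4$ by Proposition \ref{pro:two and four}. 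Your global double count replaces this with a matching bound: the pairs assigned to a fixed point of $L\cap Z$ are pairwise disjoint (your observation that a shared $S_i$ would force three collinear $S$'s is exactly the needed step, and it is correct), so the four points of $L\cap Z$ cover at most $4\cdot\lfloor 5/2\rfloor=8$ of the $\binom{5}{2}=10$ pairs. Your version buys symmetry --- no distinguished flag, no sequential relabeling --- while the paper's version produces an explicit offending simple line, in keeping with the specialization arguments used elsewhere in the section. One phrasing caution: ``any simple line must meet $L$ at a point of $Z$'' should be read contrapositively (a simple line meeting $L$ in $Z$ would no longer be simple; the real content is that no line $S_iS_j$ can be simple), but your subsequent deductions use it in exactly this correct sense.
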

\begin{proof}
Suppose there is a $4$-rich line $L$, with $L \cap Z=\{Z_1,Z_2,Z_3,Z_4\}$. Let $R = \{R_1,\ldots,R_5\}$ denote the remaining five points of $Z$. First of all, by Propositions \ref{pro:Brian's example} and \ref{pro:three and four}, if three of the points of $R$ are collinear, then $I(Z+3P)_4=0$. For $i\in\{1,\ldots,5\}$, let $L_i$ be the line containing $Z_1$ and $R_i$. Since $|R|$ is odd and none of the lines $L_i$ can contain more than two points of $R$, one of the lines $L_i$ must contain exactly one of the points of $R$. So say $L_1$ contains only $R_1$.
For $j\in\{2,\ldots,5\}$, define $M_j$ to be the line $R_1R_k$. By Proposition \ref{pro:two and four}, the line $M_2$ intersects $L$ in $\{Z_1,Z_2,Z_3,Z_4\}$. Hence up to relabelings we may assume that $M_2\cap L=Z_2$. Now the point $R_3$ cannot belong to the line $M_2$ by hypothesis, so $M_2\neq M_3$ and we may assume that $M_3\cap L=Z_3$. By the same argument we obtain that $M_2,M_3,M_4$ are distinct lines and $M_4\cap L=Z_4$. Then the line $M_5$ contains two points of $R$ and $M_5\cap L\cap Z=\emptyset$. Proposition \ref{pro:two and four} implies that $I(Z+3P)_4=0$.
\end{proof}

\begin{corollary}\label{cor:final step}
The configuration of Example \ref{ex:UnexpectedQuartic} is the only configuration of nine points in $\P^2$ containing a $4$-rich line which admits an unexpected quartic. 
\end{corollary}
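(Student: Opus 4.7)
The plan is to deduce this corollary directly from the propositions already established in this section, via a short case analysis on the number of $4$-rich lines in $Z$. Assume that $Z$ is a set of nine points in $\p^2$ containing a $4$-rich line $L$ and that $Z$ admits an unexpected quartic, i.e. $I(Z+3P)_4\neq 0$ for a general $P$.

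First I would split into two cases according to whether $L$ is the unique $4$-rich line of $Z$ or not. If $Z$ contains a second $4$-rich line $L'\neq L$, then Proposition \ref{pro:Brian's example} (which in turn leans on Proposition \ref{pro:four and four}) immediately forces $Z$ to be projectively equivalent to the nine-point configuration of Example \ref{ex:UnexpectedQuartic}, and there is nothing more to do in this branch. If on the contrary $L$ is the \emph{only} $4$-rich line of $Z$, then Proposition \ref{pro:only one with four} yields $I(Z+3P)_4=0$, contradicting the assumption that $Z$ admits an unexpected quartic. This branch is therefore vacuous.

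Combining the two branches produces exactly the statement of the corollary. The whole argument is organizational rather than computational: all the geometric and Bézout-style work has already been carried out in Propositions \ref{pro:four and four}--\ref{pro:only one with four}, so no new obstacle arises here. The main thing to be careful about is merely to verify that the dichotomy ``exactly one $4$-rich line'' versus ``at least two $4$-rich lines'' covers every configuration containing a $4$-rich line, which is immediate.
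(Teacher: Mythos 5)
Your proposal is correct and is precisely the argument the paper intends: the corollary follows immediately from Proposition~\ref{pro:Brian's example} (at least two $4$-rich lines force the configuration of Example~\ref{ex:UnexpectedQuartic}) together with Proposition~\ref{pro:only one with four} (exactly one $4$-rich line forces $I(Z+3P)_4=0$, contradicting the existence of an unexpected quartic, since Definition~\ref{def: unex curve} gives $\dim I(Z+3P)_4>0$). The paper leaves the corollary without an explicit proof for exactly this reason, so your case split on the number of $4$-rich lines reconstructs the intended reasoning.
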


Corollary \ref{cor:final step} will be the first step in the proof of Theorem \ref{thm: Main result}. In the next section we will show that if a configuration $Z$ of nine points admits an unexpected quartic, then $Z$ has a 4-rich line.

\section{Unexpected curves and stability conditions}\label{sec: semi stable approach}

Let $Z\subset\P^2$ be a finite set of points. For us, the \textit{stability} (respectively, the \textit{semistability}) of $Z$ is the stability (respectively, the semistability) of its dual line arrangement $\mathcal{A}_Z$. The latter is defined in \cite[Section 6]{CHMN} in terms of the derivation bundle of $\mathcal{A}_Z$, but what we actually need are the following properties. The first one follows from \cite[Proposition 6.4]{CHMN}.
\begin{lemma}\label{lem:no unexpected if stable} If $Z\subset\P^2$ is semistable or stable, then $Z$ admits no unexpected curve.
\end{lemma}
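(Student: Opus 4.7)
The plan is to invoke directly the characterization of unexpected curves via splitting types of the derivation bundle, established in \cite[Proposition 6.4]{CHMN}. First I would unfold the definition adopted in this paper: the (semi)stability of $Z$ is defined as the (semi)stability of the rank $2$ derivation bundle $\mathcal{D}_0(\mathcal{A}_Z)$ on $\P^2$. For such a bundle, (semi)stability is equivalent to a balance condition on the generic splitting type $(a_\ell,b_\ell)$ on a general line $\ell\subset\P^2$: ordering $a_\ell\leq b_\ell$, semistability forces $b_\ell-a_\ell\leq 1$, and stability yields the stronger constraint $a_\ell=b_\ell$ (when $c_1$ has the appropriate parity).

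Next, I would apply \cite[Proposition 6.4]{CHMN}, which characterizes the existence of an unexpected curve of some degree for $Z$ in terms of a strict imbalance of the generic splitting type, namely $b_\ell-a_\ell\geq 2$. Combining the two facts, if $Z$ is semistable, then $b_\ell-a_\ell\leq 1$, which is incompatible with $b_\ell-a_\ell\geq 2$; hence $Z$ admits no unexpected curve. Since stability is a strictly stronger condition than semistability, the same conclusion holds a fortiori in the stable case.

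The proof is thus essentially a one-step deduction from the cited proposition, so there is no genuine technical obstacle. The only mild point of care is to verify that the conventions for the derivation bundle, the splitting type, and the notion of (semi)stability used here match those of \cite{CHMN}; since the definitions are imported wholesale from \cite[Section 6]{CHMN}, this matching is automatic and the argument fits in a few lines.
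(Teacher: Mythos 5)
Your proposal is correct and matches the paper's treatment exactly: the paper gives no independent argument for this lemma, stating only that it ``follows from \cite[Proposition 6.4]{CHMN}'', which is precisely the deduction you carry out (semistability of the derivation bundle forces a balanced generic splitting type $b_\ell-a_\ell\leq 1$, incompatible with the imbalance $b_\ell-a_\ell\geq 2$ required for an unexpected curve, and stability implies semistability). The one small nuance is that you only need the necessity direction of the splitting-type criterion --- the full characterization in \cite{CHMN} also involves a condition on collinear subsets of $Z$ --- but since you use it in the correct direction, this does not affect the argument.
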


The next results are proven in \cite[Lemma 6.5]{CHMN} and \cite[Proposition 6.7]{CHMN}.

\begin{lemma}\label{lem:stability properties} Let $Z\subset\P^2$ be a set of points and $P\in Z$. Consider $Z'=Z\setminus\{P\}$ and the line arrangement $\mathcal{A}=\{PQ\mid Q\in Z'\}$. We define the set $Z''\subset\P^2$ to be the dual of $\mathcal{A}$. Then
\begin{enumerate}
\item if $|Z|$ is odd, $Z'$ is stable and $|Z''| > \frac{|Z|+1}{2}$, then $Z$ is stable,
\item if $|Z|$ is odd and $Z'$ is stable, then $Z$ is semistable,
\item if $|Z|$ is even, $Z'$ is semistable and $|Z''| > \frac{|Z|}{2}$, then $Z$ is stable,
\item if $|Z|$ is even and $Z'$ is stable, then $Z$ is stable.
\end{enumerate}
\end{lemma}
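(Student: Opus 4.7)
The plan is to deduce (semi)stability of $Z$ from that of $Z'$ by comparing the two associated derivation bundles along a short exact sequence induced by deleting the line dual to $P$. From \cite[Section 6]{CHMN}, the (semi)stability of a point set $W\subset\P^2$ is defined as Mumford-Takemoto (semi)stability of the rank-$2$ derivation bundle $D_0(\mathcal{A}_W)$ on $\P^2$, whose first Chern class equals $1-|W|$. The parity of $|W|$ therefore decides whether stability and semistability can differ: when $|W|$ is even the two coincide, and when $|W|$ is odd there is a genuine borderline slope at $2a=1-|W|$. This parity dichotomy drives the four cases of the statement.

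The main ingredient is an addition-deletion short exact sequence. Setting $\ell=P^{\vee}$, so that $\mathcal{A}_Z=\mathcal{A}_{Z'}\cup\{\ell\}$, I would use a comparison of the form
\[
0\longrightarrow D_0(\mathcal{A}_Z)\longrightarrow D_0(\mathcal{A}_{Z'})\longrightarrow \mathcal{O}_\ell(r)\longrightarrow 0,
\]
whose cokernel is a sheaf on $\ell$; the twist $r$ is controlled by the number of distinct points at which $\ell$ meets $\mathcal{A}_{Z'}$, which is exactly $|Z''|$. I would verify this locally at each crossing, distinguishing ordinary nodes from higher concurrences of lines of $\mathcal{A}_{Z'}$ on $\ell$.

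The proofs of (1)--(4) are then slope chases. Suppose $\mathcal{O}(a)\hookrightarrow D_0(\mathcal{A}_Z)$ is a potential destabilizer. Composing with the injection above gives $\mathcal{O}(a)\hookrightarrow D_0(\mathcal{A}_{Z'})$, and the (semi)stability assumption on $Z'$ bounds $2a$ from above by $2-|Z|$, strictly when $Z'$ is stable. In cases (2) and (4) the parity of $|Z|$ turns this bound directly into the desired conclusion: for $|Z|$ odd, $2a\le 1-|Z|$ is semistability of $Z$; for $|Z|$ even, an even integer $2a\le 2-|Z|$ against an odd $1-|Z|$ forces $2a<1-|Z|$, yielding stability. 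Cases (1) and (3) require excluding a single borderline value of $a$; the extra assumption on $|Z''|$ is precisely what kills it, because a saturating subbundle would produce a section of $D_0(\mathcal{A}_Z)(-a)$ that lifts through $\mathcal{O}_\ell(r)$ and must therefore vanish on more than $r$ of the points of $\ell$ encoded by $|Z''|$, giving a numerical contradiction.

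The main obstacle I expect is the precise determination of the twist $r$ and the correct conversion between the local combinatorics of $\mathcal{A}_{Z'}$ along $\ell$ and the number $|Z''|$: this requires a careful local analysis at each crossing, since lines of $\mathcal{A}_{Z'}$ can be concurrent on $\ell$. Once this bookkeeping is pinned down, the slope chase is mechanical and the four cases separate cleanly according to parity and to the strict inequalities in $|Z''|$.
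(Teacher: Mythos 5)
You should first note that the paper contains no proof of this lemma at all: it is quoted verbatim from \cite[Lemma 6.5]{CHMN} and \cite[Proposition 6.7]{CHMN}, so your proposal can only be measured against that source, whose toolkit (the derivation bundle $D_0(\mathcal{A}_Z)$ with $c_1=1-|Z|$, elementary modifications along the line $\ell=P^{\vee}$, slope arguments, the parity dichotomy) you have correctly identified. Your cases (2) and (4) are essentially sound: any destabilizing $\mathcal{O}(a)\hookrightarrow D_0(\mathcal{A}_Z)\subset D_0(\mathcal{A}_{Z'})$ is bounded by the (semi)stability of $Z'$. But your parity sentence for case (4) is wrong as written: from $2a\le 2-|Z|$ alone nothing follows, since $2a=2-|Z|>1-|Z|$ would destabilize $Z$; what saves the argument is the strictness coming from stability of $Z'$, namely $2a<2-|Z|$, whence $2a\le -|Z|<1-|Z|$.

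The genuine gap is in cases (1) and (3), where your mechanism for excluding the borderline slope fails. With your sequence $0\to D_0(\mathcal{A}_Z)\to D_0(\mathcal{A}_{Z'})\to\mathcal{O}_\ell(r)\to 0$ (where a Chern class computation gives $r=|Z''|+1-|Z|$), a borderline sub-line-bundle of $D_0(\mathcal{A}_Z)$ maps to zero in the cokernel \emph{tautologically}, because it lives in the kernel; so the contradiction you hope for (``the section must vanish on more than $r$ of the points'') never materializes, and in fact the hypothesis enters with the wrong orientation: a large $|Z''|$ makes $r$ \emph{larger}, not smaller. The step that actually kills the borderline value is the other elementary modification,
\[
0\longrightarrow D_0(\mathcal{A}_{Z'})(-1)\xrightarrow{\ \cdot\,\alpha_\ell\ } D_0(\mathcal{A}_Z)\longrightarrow \mathcal{O}_\ell\bigl(1-|Z''|\bigr)\longrightarrow 0,
\]
where $\alpha_\ell$ is the equation of $\ell$. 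In case (3), a saturated destabilizer $\mathcal{O}(a)\subset D_0(\mathcal{A}_Z)$ has $2a\ge 1-|Z|$, hence by parity $a\ge 1-|Z|/2$; its composite with the map to $\mathcal{O}_\ell(1-|Z''|)$ must vanish for degree reasons once $|Z''|>|Z|/2$, so it factors through $D_0(\mathcal{A}_{Z'})(-1)$, giving $\mathcal{O}(a+1)\subset D_0(\mathcal{A}_{Z'})$; semistability of $Z'$ then yields $2(a+1)\le 2-|Z|$, i.e.\ $a\le-|Z|/2$, a contradiction. It is the extra unit bought by the twist $(-1)$ --- not a count of vanishing points in your cokernel --- that excludes the single borderline slope; case (1) runs identically using $|Z''|>(|Z|+1)/2$ together with the fact that for $|Z'|$ even (odd $c_1$) stability and semistability of $Z'$ coincide. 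Two smaller points you would still need to settle: saturating the destabilizing subsheaf, and justifying that the relevant composites are zero or nonzero as claimed after restriction to $\ell$.
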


\begin{lemma}\label{lem:if lingeneral then stable} If $Z\subset\P^2$ is a set of at least four points in linearly general position, then $Z$ is stable.
\end{lemma}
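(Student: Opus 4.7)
The plan is to prove the lemma by induction on $n = |Z|$, with Lemma~\ref{lem:stability properties} doing all the work in the inductive step. The base case is $n = 4$, which must be handled directly: four points in linearly general position are projectively equivalent to the standard frame $[1,0,0]$, $[0,1,0]$, $[0,0,1]$, $[1,1,1]$, so the dual line arrangement is that of four general lines in $\P^2$. Stability of this classical arrangement can be verified by analyzing its derivation bundle directly and checking that no rank-one subsheaf has slope large enough to destabilize. This single hand computation is the main obstacle in the argument; everything else is formal bookkeeping.

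For the inductive step, assume the statement holds for all linearly general sets of size $n-1 \geq 4$, and let $Z$ be linearly general with $|Z| = n \geq 5$. Pick any $P \in Z$ and set $Z' = Z \setminus \{P\}$. Any subset of a linearly general set is linearly general, so $Z'$ is linearly general with $|Z'| = n-1 \geq 4$, and hence stable by the inductive hypothesis.

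Next I compute $|Z''|$, where $Z''$ is the dual of the line arrangement $\mathcal{A} = \{PQ \mid Q \in Z'\}$. Since $Z$ contains no three collinear points, for distinct $Q_1, Q_2 \in Z'$ the lines $PQ_1$ and $PQ_2$ must be distinct (else $P, Q_1, Q_2$ would be collinear). Therefore $|\mathcal{A}| = n-1$, and so $|Z''| = n - 1$.

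Finally I split on the parity of $n$ and invoke Lemma~\ref{lem:stability properties}. If $n$ is even, part~(4) applies directly: $Z'$ stable forces $Z$ stable. If $n$ is odd (so $n \geq 5$), I use part~(1), which requires the inequality $|Z''| > (n+1)/2$; since $|Z''| = n-1$, this reduces to $n > 3$, which holds. In either case $Z$ is stable, completing the induction.
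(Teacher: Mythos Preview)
The paper does not supply its own proof of this lemma: it simply attributes the result to \cite[Proposition 6.7]{CHMN}. So there is no argument in the paper to compare against beyond the citation.

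Your inductive approach is sound and, pleasantly, reduces the statement to tools the paper already imports. The inductive step is correct: linear general position is inherited by subsets, so $Z'$ is stable by induction; the lines $PQ$ for $Q\in Z'$ are pairwise distinct because no three points of $Z$ are collinear, giving $|Z''|=n-1$; and then Lemma~\ref{lem:stability properties}(4) (for $n$ even) or Lemma~\ref{lem:stability properties}(1) (for $n$ odd, where $n-1>(n+1)/2$ is exactly $n>3$) finishes. This is a clean way to bootstrap the general case from the four-point case using only the addition--deletion machinery already quoted.

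The one genuine gap is the base case. You assert that the derivation bundle of four general lines is stable and that this ``can be verified'' by checking rank-one subsheaves, but you do not actually carry out that verification. Since this is the anchor for the whole induction, the argument is incomplete as written. One quick fix in the spirit of the paper is to note that four general points have splitting type $(1,2)$ (e.g.\ by the same kind of direct Hilbert-function check the paper performs for $W_5$ in Proposition~\ref{pro: Fermat or die}), and that for $|Z|$ even the balanced splitting type $(\lfloor (|Z|-1)/2\rfloor,\lceil (|Z|-1)/2\rceil)$ is exactly the stable case; alternatively, simply cite \cite[Proposition 6.7]{CHMN} for $n=4$ and keep your induction for $n\ge 5$. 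Either way, once the base case is nailed down, your argument is a legitimate alternative to the bare citation.
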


There are some configurations of points which will be useful for us.

\begin{definition}\label{def:badconfigurations}
For $n\geq 3$, the factors of the polynomial
\[
(x^n - y^n)(x^n - z^n)(y^n - z^n)\in\mathbb{C}[x,y,z]
\]
define the \emph{Fermat arrangement} of $3n$ lines in $\P^2$. Its dual is a configuration of $3n$ points in $\P^2$, called the \emph{dual Fermat configuration} and denoted by $F_n$.
\end{definition}

Since we are dealing with sets of nine points in the plane, for us the most interesting Fermat configuration is $F_3$, shown in Figure \ref{fig: dual Fermat configuration}.

\begin{figure}[ht]
\begin{minipage}[c]{.3\textwidth}
\centering
\begin{tikzpicture}[line cap=round,line join=round,>=triangle 45,x=1.0cm,y=1.0cm, scale=1.2]
\clip(-1.,-1.) rectangle (3.,3.);

\draw [line width=0.7pt] (-0.5,2) -- (2.5,2);
\draw [line width=0.7pt] (-0.5,1) -- (2.5,1);
\draw [line width=0.7pt] (-0.5,0) -- (2.5,0);

\draw [line width=0.7pt, dotted] (-0.5,2) -- (-0.8,2);
\draw [line width=0.7pt, dotted] (2.5,2) -- (2.8,2);
\draw [line width=0.7pt, dotted] (-0.5,1) -- (-0.8,1);
\draw [line width=0.7pt, dotted] (2.5,1) -- (2.8,1);
\draw [line width=0.7pt, dotted] (-0.5,0) -- (-0.8,0);
\draw [line width=0.7pt, dotted] (2.5,0) -- (2.8,0);

\draw [line width=0.7pt] (0.,-0.5) -- (0.,2.5);
\draw [line width=0.7pt] (1.,-0.5) -- (1.,2.5);
\draw [line width=0.7pt] (2.,-0.5) -- (2.,2.5);

\draw [line width=0.7pt, dotted] (0.,-0.5) -- (0.,-0.8);
\draw [line width=0.7pt, dotted] (0.,2.5) -- (0.,2.8);
\draw [line width=0.7pt, dotted] (1,-0.5) -- (1,-0.8);
\draw [line width=0.7pt, dotted] (1,2.5) -- (1,2.8);
\draw [line width=0.7pt, dotted] (2,-0.5) -- (2,-0.8);
\draw [line width=0.7pt, dotted] (2,2.5) -- (2,2.8);

\draw [line width=0.7pt] (-0.4,2.4) -- (2.4,-0.4);
\draw [line width=0.7pt] (2.4,2.4) -- (-0.4,-0.4);

\draw [line width=0.7pt, dotted] (-0.4,2.4) -- (-0.6,2.6);
\draw [line width=0.7pt, dotted] (2.4,2.4) -- (2.6,2.6);

\draw [line width=0.7pt, dotted] (-0.4,-0.4) -- (-0.6,-0.6);
\draw [line width=0.7pt, dotted] (2.4,-0.4) -- (2.6,-0.6);

\begin{scriptsize}
\draw [fill=black] (0,0) circle (1pt);
\draw[color=black] (-0.2,0.15) node {$P_5$};
\draw [fill=black] (0,1) circle (1pt);
\draw[color=black] (-0.2,1.15) node {$P_4$};
\draw [fill=black] (0,2) circle (1pt);
\draw[color=black] (0.2,2.15) node {$P_1$};
\draw [fill=black] (1,0) circle (1pt);
\draw[color=black] (0.8,-0.15) node {$P_9$};
\draw [fill=black] (2,0) circle (1pt);
\draw[color=black] (1.8,-0.15) node {$P_7$};
\draw [fill=black] (2,1) circle (1pt);
\draw[color=black] (2.2,0.85) node {$P_8$};
\draw [fill=black] (1,1) circle (1pt);
\draw[color=black] (1.2,0.85) node {$P_6$};
\draw [fill=black] (1,2) circle (1pt);
\draw[color=black] (1.2,2.15) node {$P_2$};
\draw [fill=black] (2,2) circle (1pt);
\draw[color=black] (2.2,1.85) node {$P_3$};
\end{scriptsize}
\end{tikzpicture}
\end{minipage}
\hspace{14pt}
\begin{minipage}[c]{.3\textwidth}
\centering
\begin{tikzpicture}[line cap=round,line join=round,>=triangle 45,x=1.0cm,y=1.0cm, scale=1.2]
\clip(-1.,-1.) rectangle (3.,3.);

\draw [line width=0.2pt, dotted] (-0.5,2) -- (2.5,2);
\draw [line width=0.2pt, dotted] (-0.5,1) -- (2.5,1);
\draw [line width=0.2pt, dotted] (-0.5,0) -- (2.5,0);
\draw [line width=0.2pt, dotted] (0.,-0.5) -- (0.,2.5);
\draw [line width=0.2pt, dotted] (1.,-0.5) -- (1.,2.5);
\draw [line width=0.2pt, dotted] (2.,-0.5) -- (2.,2.5);
\draw [line width=0.2pt, dotted] (-0.4,2.4) -- (2.4,-0.4);
\draw [line width=0.2pt, dotted] (2.4,2.4) -- (-0.4,-0.4);

\begin{scriptsize}
\draw [fill=black] (0,0) circle (1pt);
\draw[color=black] (-0.2,0.15) node {$P_5$};
\draw [fill=white] (0,1) circle (1.8pt);
\draw [fill=black] (0,1) circle (0.8pt);
\draw[color=black] (-0.2,1.15) node {$P_4$};
\draw [fill=white] (0,2) circle (1.8pt);
\draw[color=black] (0.2,2.15) node {$P_1$};
\draw [fill=white] (1,0) circle (1.8pt);
\draw[color=black] (0.8,-0.15) node {$P_9$};
\draw [fill=white] (2,0) circle (1.8pt);
\draw [fill=black] (2,0) circle (0.8pt);
\draw[color=black] (1.8,-0.15) node {$P_7$};
\draw [fill=white] (2,1) circle (1.8pt);
\draw[color=black] (2.2,0.85) node {$P_8$};
\draw [fill=black] (1,1) circle (1pt);
\draw[color=black] (1.2,0.85) node {$P_6$};
\draw [fill=white] (1,2) circle (1.8pt);
\draw [fill=black] (1,2) circle (0.8pt);
\draw[color=black] (1.2,2.15) node {$P_2$};
\draw [fill=black] (2,2) circle (1pt);
\draw[color=black] (2.2,1.85) node {$P_3$};
\end{scriptsize}
\end{tikzpicture}
\end{minipage}
\hspace{14pt}
\begin{minipage}[c]{.3\textwidth}
\centering
\begin{tikzpicture}[line cap=round,line join=round,>=triangle 45,x=1.0cm,y=1.0cm, scale=1.2]
\clip(-1.,-1.) rectangle (3.,3.);

\draw [line width=0.2pt, dotted] (-0.5,2) -- (2.5,2);
\draw [line width=0.2pt, dotted] (-0.5,1) -- (2.5,1);
\draw [line width=0.2pt, dotted] (-0.5,0) -- (2.5,0);
\draw [line width=0.2pt, dotted] (0.,-0.5) -- (0.,2.5);
\draw [line width=0.2pt, dotted] (1.,-0.5) -- (1.,2.5);
\draw [line width=0.2pt, dotted] (2.,-0.5) -- (2.,2.5);
\draw [line width=0.2pt, dotted] (-0.4,2.4) -- (2.4,-0.4);
\draw [line width=0.2pt, dotted] (2.4,2.4) -- (-0.4,-0.4);

\begin{scriptsize}
\draw [fill=white] (0,0) circle (1.8pt);
\draw [fill=black] (0,0) circle (0.8pt);
\draw[color=black] (-0.2,0.15) node {$P_5$};
\draw [fill=white] (0,1) circle (1.8pt);
\draw[color=black] (-0.2,1.15) node {$P_4$};
\draw [fill=black] (0,2) circle (1pt);
\draw[color=black] (0.2,2.15) node {$P_1$};
\draw [fill=white] (1,0) circle (1.8pt);
\draw[color=black] (0.8,-0.15) node {$P_9$};
\draw [fill=black] (2,0) circle (1pt);
\draw[color=black] (1.8,-0.15) node {$P_7$};
\draw [fill=white] (2,1) circle (1.8pt);
\draw [fill=black] (2,1) circle (0.8pt);
\draw[color=black] (2.2,0.85) node {$P_8$};
\draw [fill=black] (1,1) circle (1pt);
\draw[color=black] (1.2,0.85) node {$P_6$};
\draw [fill=white] (1,2) circle (1.8pt);
\draw [fill=black] (1,2) circle (0.8pt);
\draw[color=black] (1.2,2.15) node {$P_2$};
\draw [fill=white] (2,2) circle (1.8pt);
\draw[color=black] (2.2,1.85) node {$P_3$};
\end{scriptsize}
\end{tikzpicture}
\end{minipage}
\caption{The dual Fermat configuration $F_3$ and its corresponding twelve $3$-rich lines. The first eight $3$-rich lines are depicted on the left picture. The remaining four $3$-rich lines are obtained regarding the open circles as representing collinear points, and likewise the dotted circles as representing collinear points.}\label{fig: dual Fermat configuration}
\end{figure}
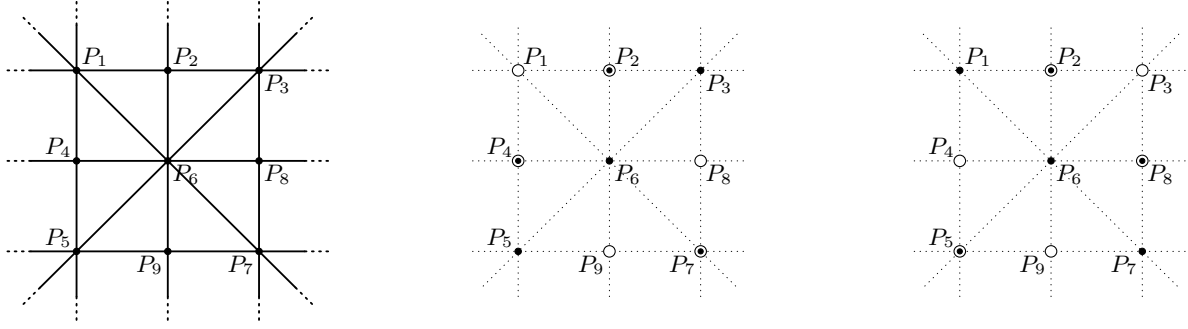

As pointed out in \cite[Section 1.1]{Ha}, $F_3$ has the peculiar feature to admit no simple lines and no $k$-rich lines for any $k\ge 4$. For our purpose, we need to know whether there are other configurations of nine points with similar properties. Our next task is to prove that $F_3$ is the only one, thereby solving \cite[Open problem 1.1.6]{Ha} for sets of nine points.

\begin{lemma}\label{lem:Fermat has a lot of quirks} Let $Y\subset\p^2$ be a set of nine points. Assume that every line that meets $Y$ at at~least two points contains exactly three points of $Y$. Then
	\begin{enumerate}
		\item every point of $Y$ is contained in exactly four 3-rich lines,
		\item $Y$ admits twelve 3-rich lines,
		\item for every 3-rich line $M$, there are two other 3-rich lines $M',M''$ such that $M\cap M'\notin Y$ and $M\cap M''\notin Y$.
	\end{enumerate}
	\begin{proof}
	\begin{enumerate}
		\item Let $P\in Y$. By hypothesis, for every $Q\in Y\setminus\{P\}$, the line $PQ$ is 3-rich, so there exists a unique $Q'\in T\setminus\{P,Q\}$ such that $Y\cap PQ=\{P,Q,Q'\}$. In this way the eight points of $Y\setminus\{P\}$ are partitioned in four pairs. Each pair defines a 3-rich line containing $P$.
		\item By hypothesis, each pair of points of $Y$ defines a 3-rich line. In this way, every such line is counted $\binom{3}{2}$ times, so the number of 3-rich lines is $\binom{9}{2}\cdot\frac{1}{3}=12$.
		\item Let $M$ be a 3-rich line and let $P\in M\cap Y$. As in part (1), the remaining eight points of $Y$ are partitioned into four pairs $(Q_1,Q_1'),\dots,(Q_4,Q_4')$ is such a way that $P\in Q_iQ_i'$  for every $i\in\{1,\dots,4\}$. We may assume $M=Q_1Q_1'$. Another 3-rich line meeting $M$ at a point of $Y$ is defined by the choice of a point among $\{P,Q_1,Q_1'\}$ and an index among $\{2,3,4\}$, so there are nine of them. Now the statement follows from part (2).\qedhere
	\end{enumerate}
	\end{proof}
\end{lemma}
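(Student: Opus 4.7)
The plan is to prove the three parts in order, using a simple double-counting argument that exploits the strong hypothesis that every non-simple line is exactly $3$-rich.

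For part (1), I would fix $P\in Y$ and, using the hypothesis, note that for each of the $8$ remaining points $Q\in Y\setminus\{P\}$ the line $PQ$ is $3$-rich, hence contains a unique third point $Q'\in Y$. This defines an involution without fixed points on $Y\setminus\{P\}$, partitioning the $8$ points into $4$ unordered pairs; each pair produces one $3$-rich line through $P$, and every $3$-rich line through $P$ arises this way.

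For part (2), I would count pairs of points in two different ways. On one hand, there are $\binom{9}{2}=36$ unordered pairs of points of $Y$, and each such pair determines a unique line which, by hypothesis, is $3$-rich. On the other hand, every $3$-rich line contributes exactly $\binom{3}{2}=3$ such pairs, and distinct $3$-rich lines contribute disjoint sets of pairs. Hence the number of $3$-rich lines is $36/3=12$.

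For part (3), the key observation is a counting argument combining (1) and (2). Fix a $3$-rich line $M$ and write $M\cap Y=\{P_1,P_2,P_3\}$. By part (1), through each $P_i$ pass exactly $4$ three-rich lines, one of which is $M$ itself, so $3$ other $3$-rich lines pass through $P_i$. These $3\cdot 3=9$ lines are pairwise distinct, because any $3$-rich line through two of the $P_i$ would share two points with $M$ and hence equal $M$. By part (2) there are $12-1=11$ three-rich lines distinct from $M$, so exactly $11-9=2$ of them meet $M$ outside $Y$. Calling these two lines $M'$ and $M''$ concludes the proof.

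I do not anticipate any real obstacle: the statement is a straightforward consequence of the hypothesis via elementary incidence counting, and the only subtlety is checking distinctness in the count for (3), which follows immediately from the fact that two points determine a unique line.
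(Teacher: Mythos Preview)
Your proof is correct and follows essentially the same approach as the paper: parts (1) and (2) are identical to the paper's arguments, and part (3) uses the same $3\times 3=9$ count of other $3$-rich lines meeting $M$ in $Y$, leaving $12-1-9=2$ that do not. If anything, your justification of distinctness in part (3) is slightly more explicit than the paper's.
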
		
		
\begin{corollary}
	\label{cor: Fermat is the only one without simple lines}
	$F_3$ is the only configuration of nine points in $\p^2$ with no $k$-rich lines for every $k\ge 4$ and no simple lines.
\end{corollary}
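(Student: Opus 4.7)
My plan is to show that the combinatorial structure imposed by the hypothesis forces $Y$ to be projectively equivalent to a rigid configuration, and then to identify this unique configuration with $F_3$. Since $F_3$ itself satisfies the hypotheses, uniqueness will give the result.

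By Lemma \ref{lem:Fermat has a lot of quirks}, the assumption on $Y$ endows it with the structure of a Steiner triple system on nine points: every pair of points of $Y$ lies on a unique $3$-rich line, and there are exactly twelve such lines. Since the Steiner triple system on $9$ points is combinatorially unique, $Y$ is combinatorially isomorphic to the affine plane $\mathrm{AG}(2,3)\cong(\mathbb{Z}/3\mathbb{Z})^2$, with the $3$-rich lines corresponding to the twelve affine lines. Moreover, any three geometrically collinear points of $Y$ automatically lie on a $3$-rich line, so geometric collinearity in $Y$ coincides with combinatorial collinearity in $\mathrm{AG}(2,3)$.

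Next I would pick four points $P_1,P_2,P_3,P_4\in Y$ corresponding to a combinatorial quadrilateral in $\mathrm{AG}(2,3)$, e.g.\ the images of $(0,0),(1,0),(0,1),(1,1)$. By the previous step these four points are in geometric general position, so after a projective change of coordinates I may normalize them to $[1,0,0]$, $[0,1,0]$, $[0,0,1]$, $[1,1,1]$. The remaining five points of $Y$ are then determined by the incidence relations of $\mathrm{AG}(2,3)$. For instance the point indexed by $(2,2)$ lies on both $\{(0,0),(1,1),(2,2)\}$ and $\{(1,0),(0,1),(2,2)\}$, hence it must be $P_1P_4\cap P_2P_3=[0,1,1]$. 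Parametrizing one of the four remaining new points by a scalar $h$ and imposing all the remaining collinearities prescribed by $\mathrm{AG}(2,3)$ yields, after a routine linear-algebra computation, the single quadratic constraint $h^2-h+1=0$. Its two roots $h$ and $1/h$ satisfy $1-h=1/h$ and are exchanged by the projective involution $[x,y,z]\mapsto[x,z,y]$, so the two resulting configurations are projectively equivalent. This proves that there is a unique such configuration up to projective equivalence.

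Finally, since $F_3$ satisfies the hypotheses of the statement, the above uniqueness forces $Y$ to be projectively equivalent to $F_3$. The main obstacle is the incidence bookkeeping in the coordinate step: one needs to verify that all non-trivial incidences of $\mathrm{AG}(2,3)$ collapse to the single equation $h^2-h+1=0$, which is precisely the algebraic manifestation of the rigidity of the Hesse configuration and forces the appearance of a primitive sixth root of unity.
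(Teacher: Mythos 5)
Your argument is correct, and it differs from the paper's in both of its halves. The paper's proof is purely synthetic: starting from Lemma \ref{lem:Fermat has a lot of quirks}, it fixes $P_1\in Y$, labels the four $3$-rich lines through it as $P_2P_3$, $P_4P_5$, $P_6P_7$, $P_8P_9$, and then uses parts (1) and (3) of that lemma together with the absence of $4$-rich lines to pin down the entire incidence table step by step ($P_4\in P_2P_7\cap P_3P_9$, $P_3\in P_4P_9\cap P_5P_6\cap P_7P_8$, and so on), concluding ``Thus $Y=F_3$.'' You instead outsource exactly this bookkeeping to the classical uniqueness of the Steiner triple system $S(2,3,9)\cong\mathrm{AG}(2,3)$ --- a legitimate move, since Lemma \ref{lem:Fermat has a lot of quirks} does give that every pair of points of $Y$ lies on a unique $3$-rich line, that there are twelve of them, and (by the hypothesis on $k$-rich lines) that geometric and combinatorial collinearity agree --- and then you add a coordinatization step that the paper leaves implicit: the paper's final identification silently upgrades a combinatorial isomorphism to a projective equivalence, which requires precisely the rigidity computation you carry out. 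I verified that computation: normalizing the quadrilateral $(0,0),(1,0),(0,1),(1,1)$ to $[1,0,0],[0,1,0],[0,0,1],[1,1,1]$ gives $(2,2)=[0,1,1]$, and writing $(2,0)=[1,h,0]$ forces $(1,2)=[1,h,1]$, $(2,1)=[h,h,1]$ and $(0,2)=[1,0,h/(h-1)]$, after which each of the three residual incidences of $\mathrm{AG}(2,3)$ reduces to the single equation $h^2-h+1=0$, whose roots are primitive sixth roots of unity; moreover $h/(h-1)=1/h$ on that locus, and the involution $[x,y,z]\mapsto[x,z,y]$ combined with transposing the $\mathrm{AG}(2,3)$ coordinates exchanges the two realizations, so they are projectively equivalent, as you claim. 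The trade-off between the two approaches: the paper's route is self-contained (it in effect reproves the uniqueness of $S(2,3,9)$ by hand from Lemma \ref{lem:Fermat has a lot of quirks}) but elides the realization-rigidity step, while yours is explicit and complete on the geometric side but rests on the quoted combinatorial classification, which, if a citation were disallowed, would have to be replaced by an argument essentially identical to the paper's synthetic one.
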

	\begin{proof}
		Let $Y$ be such a configuration and let $P_1\in Y$. By Lemma \ref{lem:Fermat has a lot of quirks}(1), the point $P_1$ is contained in exactly four $3$-rich lines, call them $P_2P_3$, $P_4P_5$, $P_6P_7$ and $P_8P_9$ (see Figure \ref{fig: dual Fermat configuration} (middle)). By Lemma \ref{lem:Fermat has a lot of quirks}(3), there is a 3-rich line meeting $P_2P_3$ outside $Y$. Up to relabeling, we may assume that this line is $P_4P_6$ and that $P_4P_6\cap Y=\{P_4,P_6,P_8\}$. By Lemma \ref{lem:Fermat has a lot of quirks}(1), the point $P_4$ is contained in exactly four $3$-rich lines. Two of them are $P_4P_5$ and $P_4P_6$. Since $Y$ does not admit $4$-rich lines, the remaining two $3$-rich lines through $P_4$ must be $P_iP_j$ and $P_kP_l$ with $\{i,j,k,l\}=\{2,3,7,9\}$ and $\{i,j\}\neq\{2,3\}$. Without loss of generality, we can therefore suppose that $P_4\in P_2P_7\cap P_3P_9$. In a similar fashion, one may verify that $P_3\in P_1P_2\cap P_4P_9\cap P_5P_6\cap P_7P_8$. In the same way, $P_2\in P_6P_9\cap P_5P_8$ and $P_7\in P_5P_9$. Thus $Y=F_3$.
	\end{proof}

Now that we have a better understanding of the dual Fermat configuration, we can state our result on semistability of sets of nine points.

\begin{proposition}\label{pro: Fermat or die} If $Z\subset\P^2$ is a set of nine points containing no $k$-rich lines for $k \geq 4$, then either $Z=F_3$ or $Z$ is semistable.
\end{proposition}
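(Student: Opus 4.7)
The plan is to construct a chain of subsets $W_4\subset W_5\subset\dots\subset W_9=Z$ with $|W_k|=k$ and to propagate (semi)stability along the chain by iterated applications of Lemma \ref{lem:stability properties}, concluding with a final invocation of part (2) of that lemma to deduce semistability of $Z$ from stability of $W_8$.

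I would begin with a pair-counting observation. Each 3-rich line of $Z$ accounts for $\binom{3}{2}=3$ of the $\binom{9}{2}=36$ pairs of points of $Z$, and by Corollary \ref{cor: Fermat is the only one without simple lines}, $F_3$ is the unique nine-point configuration with no simple lines and no $k$-rich lines for $k\ge 4$. Hence $Z\neq F_3$ forces $Z$ to admit at most eleven 3-rich lines, and therefore at least three simple lines. Using this, I would show that $Z$ contains four points in linearly general position: fix any $P\in Z$; the lines through $P$ partition $Z\setminus\{P\}$ into groups of size at most $2$ (since no 4-rich lines exist), so there are at least four such groups, and exploiting a simple line one selects three further points not forming a collinear triple. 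Setting $W_4$ to these four points, Lemma \ref{lem:if lingeneral then stable} gives $W_4$ stable.

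The extensions then alternate. The \emph{unconditional} steps $W_5\to W_6$ and $W_7\to W_8$ rely on Lemma \ref{lem:stability properties}(4). The \emph{conditional} steps $W_4\to W_5$ and $W_6\to W_7$ rely on Lemma \ref{lem:stability properties}(1), whose dual-count hypothesis $|Z''|>(|Z|+1)/2$ translates into choosing $P_5\in Z\setminus W_4$ not collinear with any pair from $W_4$ (so $|Z''|=4$), and $P_7\in Z\setminus W_6$ lying on at most one line spanned by a pair from $W_6$ (so $|Z''|\ge 5$). Lemma \ref{lem:stability properties}(2) applied with the final point $P_9\in Z\setminus W_8$ then yields that $Z$ is semistable.

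The main obstacle is to guarantee that $W_4$ and $W_6$ can be chosen so that suitable $P_5$ and $P_7$ exist in $Z$. I expect this to reduce to a case analysis governed by the number of 3-rich lines incident to each point: since $Z$ has no 4-rich lines, each such line contains at most one point of $Z\setminus W_4$ besides its two points in $W_4$, and similarly each of the $\binom{6}{2}$ pair-lines of $W_6$ contains at most one additional point of $Z$. The hypotheses --- no 4-rich lines and $Z\neq F_3$, hence at least three simple lines --- precisely exclude the extremal arrangements that could obstruct the construction, allowing the avoidance conditions for $P_5$ and $P_7$ to be verified directly, possibly after a judicious initial choice of a point of $Z$ lying on a simple line.
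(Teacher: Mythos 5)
Your skeleton of the inductive chain is sound as far as the lemma bookkeeping goes: the parity pattern (1)/(4)/(1)/(4)/(2) is applied correctly, the count $|Z''|>3$ (resp.\ $|Z''|>4$) does translate into $P_5$ avoiding all pair-lines of $W_4$ (resp.\ $P_7$ lying on at most one pair-line of $W_6$), and your preliminary counting is right: $3t+s=36$ forces $s\geq 3$ simple lines once $Z\neq F_3$ via Corollary \ref{cor: Fermat is the only one without simple lines}. But the proof has a genuine gap exactly where you flag ``the main obstacle'': the existence of admissible $P_5$ and $P_7$ is asserted, never established, and it is not a routine verification. For a fixed $W_4$, each of its six pair-lines can absorb one further point of $Z$ (the no-4-rich-line hypothesis only caps lines at three points), so a priori all five points of $Z\setminus W_4$ lie on pair-lines and no $P_5$ exists; your plan then needs a different $W_4$, and nothing shows some choice always works. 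Worse, your chain requires $W_5$ to be in linearly general position, i.e.\ $Z$ must contain a $5$-arc --- and this \emph{fails} for $F_3$ itself: every pair of points of $F_3$ lies on a $3$-rich line, so any collinear triple in $\P^2$ is one of its twelve lines, and (as for caps in the affine plane of order three) no five points of $F_3$ avoid a collinear triple. So your closing claim that the hypotheses ``precisely exclude the extremal arrangements'' is not a formality: it is the entire content of the proposition for near-extremal $Z\neq F_3$, and the $P_7$ step (each of the three points of $Z\setminus W_6$ can lie on up to three pair-lines of $W_6$) has the same unresolved character.

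The paper's proof is instructive to compare because it is engineered to avoid both of your problematic existence claims. It runs the chain downward: it uses the simple line $L$ through $Z_8,Z_9$ guaranteed by Corollary \ref{cor: Fermat is the only one without simple lines}, and removes a point $A\in\{Z_5,Z_6,Z_7\}$ chosen so that at least five distinct lines join $Z_9$ to the remaining seven points; this verifies the \emph{only} $|Z''|$-type hypothesis the paper ever needs, namely Lemma \ref{lem:stability properties}(3) for the eight-point set, and the verification is immediate from the partition of $Z\setminus\{Z_9\}$ by lines through $Z_9$. It then descends by Lemma \ref{lem:stability properties}(2) and (4) to a five-point set $W_5$ having \emph{at most one} $3$-rich line --- obtained by arranging the six-point set to differ from the complete quadrilateral configuration $S$ --- and settles the base case with one collinear triple by a direct \texttt{Macaulay2} computation showing the splitting type is $(2,2)$, rather than by Lemma \ref{lem:if lingeneral then stable}. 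To salvage your bottom-up route you would have to either carry out the full avoidance case analysis (in effect re-deriving what the paper packages via $S$ and the simple line), or weaken your base case as the paper does, allowing $W_5$ one collinear triple and proving its stability directly; as written, the proposal does not prove the proposition.
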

\begin{proof}
 Our idea is to reduce the problem to the study of smaller subsets of $Z$.
Assume that $Z$ is not $F_3$. By Corollary \ref{cor: Fermat is the only one without simple lines}, $Z$ admits a simple line $L$. Assume $L\cap Z=\{Z_8,Z_9\}$. By Lemma \ref{lem:stability properties}(2), in order to conclude it is enough to show that there exists a stable subset of $Z$ with eight points. Since there are no 4-rich lines, the set $\{Z_jZ_9\mid j\in\{1,\ldots,8\}\}$ has at least five distinct elements $L,L_1,\ldots,L_4$. Up to relabeling, we can assume $Z_1\in L_1,\ldots,Z_4\in L_4$. Let $A\in\{Z_5,Z_6,Z_7\}$ and set $W_8=Z\setminus\{A\}$. All we need to do is to prove that $W_8$ is stable. In order to do that, we want to apply Lemma \ref{lem:stability properties}(3). Define $W_7=W_8\setminus\{Z_9\}$. We will prove that $W_7$ is semistable.
In turn, by Lemma \ref{lem:stability properties}(2) it is enough to check that there exists a stable subset $W_6\subset W_7$ with six elements.

We indicate by $S$ the configuration of six points in $\p^2$ given by the intersection points of four general lines. Now we want to show that $W_7$ contains at least a subset $W_6$ of six elements which is different from the configuration $S$. Consider one of the subsets of six points of $W_7$. If it is not $S$, then we are done. If it is $S$, then the seventh point of $W_7$ does not lie on any of the four 3-rich lines of $S$, because our hypothesis guarantees that $Z$ has no $k$-rich lines for $k\ge 4$. Therefore, if we replace one of the points of $S$ with the seventh one, the resulting subset of $W_7$ is not $S$. Call this subset $W_6$.

Since $W_6$ is not $S$, there exists a subset $W_5\subset W_6$ of five elements with at most one 3-rich line. By Lemma \ref{lem:stability properties}(4), it is enough to prove that $W_5$ is stable. If $W_5$ has no 3-rich lines, then it is stable by Lemma \ref{lem:if lingeneral then stable}. Otherwise $W_5$ has exactly one 3-rich line. Up to projective equivalence, we assume that $W_5=\{B_1,B_2,B_3,B_4,B_5\}$, where
\[
B_1=[1,0,0],\quad B_2=[0,1,0],\quad B_3=[0,0,1],\quad B_4=[1,1,1],\quad B_5=[1,a,0]
\]
for some parameter $a$. Using the \verb+Macaulay2+ lines one can verify that also in this case $W_5$ is stable.

{\small
\begin{verbatim}
KK = frac(QQ[a,b,c]); R = KK[x,y,z];
B1 = ideal(y,z); B2 = ideal(x,z); B3 = ideal(x,y);
B4 = ideal(x-y,x-z); B5 = ideal(y-a*x,z);
P = ideal(y-b*x,z-c*x);
W5 = intersect(B1,B2,B3,B4,B5);
m = j -> (J = intersect(W5,P^j); return binomial(j+3,2)-hilbertFunction(j+1,J))
m(1), m(2) -- = (0,2)
\end{verbatim}
}

In this way we check that the splitting type (see \cite[Section 1]{CHMN} for a definition) of $W_5$ is $(2,2)$, hence $W_5$ is stable.
\qedhere
\end{proof}

The last result of this section is an important step toward the proof of Theorem \ref{thm: Main result}.
\begin{lemma}\label{lem:halfway done} Up to projective equivalence, the only configuration of nine points $Z\subset\P^2$ admitting an unexpected quartic is the one presented in Example \ref{ex:UnexpectedQuartic}.
	\begin{proof}
		If $Z$ has a 4-rich line, then we conclude by Corollary \ref{cor:final step}. Assume then that $Z$ admits no 4-rich lines. Since the configuration $F_3$ does not admit an unexpected curve by \cite[Section 6]{CHMN}, Proposition \ref{pro: Fermat or die} and Lemma \ref{lem:no unexpected if stable} imply that $Z$ does not admit an unexpected quartic.
	\end{proof}
\end{lemma}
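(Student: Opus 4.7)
The plan is to dispose of the lemma by a clean dichotomy on whether $Z$ contains a $4$-rich line, letting all the heavy lifting be done by the results already established in Sections~\ref{sec: uniqueness proof, geom arg} and \ref{sec: semi stable approach}. Since \cite[Corollary 5.5]{CHMN} restricts the interesting case to $|Z|=9$, this is the only cardinality we need to handle.

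First I would treat the case where $Z$ admits some $4$-rich line $L$. By the cumulative effort of Propositions~\ref{pro:four and four}, \ref{pro:Brian's example}, \ref{pro:three and four}, and \ref{pro:only one with four}, which is packaged precisely in Corollary~\ref{cor:final step}, the only configuration of nine points containing a $4$-rich line and admitting an unexpected quartic is the one of Example~\ref{ex:UnexpectedQuartic}. So in this case there is nothing further to prove.

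Second I would treat the complementary case where $Z$ contains no $4$-rich line. Since by \cite[Theorem 1.2]{CHMN} an unexpected quartic also rules out $5$-rich lines, $Z$ has no $k$-rich line for any $k\geq 4$. Proposition~\ref{pro: Fermat or die} then applies and forces the alternative: either $Z$ coincides with the dual Fermat configuration $F_3$, or $Z$ is semistable. In the latter case Lemma~\ref{lem:no unexpected if stable} immediately rules out an unexpected curve of any degree. In the former case, one invokes the computation of \cite[Section 6]{CHMN} (or equivalently \cite[Section 1.1]{Ha}) showing that $F_3$ admits no unexpected curve. Combining the two subcases, no configuration without a $4$-rich line admits an unexpected quartic, which finishes the argument.

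The proof is essentially a synthesis step, so there is no new obstacle here: the substantive content has already been packaged into Corollary~\ref{cor:final step} on one side and into Proposition~\ref{pro: Fermat or die} together with Lemma~\ref{lem:no unexpected if stable} on the other. The only thing to be careful about is the bookkeeping of hypotheses, namely that the forbidden $k$-rich lines for $k\geq 5$ are indeed excluded by \cite[Theorem 1.2]{CHMN} (as is already used at the start of Section~\ref{sec: uniqueness proof, geom arg}), so that the no-$k$-rich-line-for-$k\geq 4$ hypothesis of Proposition~\ref{pro: Fermat or die} is genuinely available once we are in the second branch of the dichotomy.
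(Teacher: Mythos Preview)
Your proposal is correct and follows essentially the same route as the paper's own proof: split on whether $Z$ has a $4$-rich line, invoke Corollary~\ref{cor:final step} in the first case, and in the second case use Proposition~\ref{pro: Fermat or die} together with Lemma~\ref{lem:no unexpected if stable} and the fact that $F_3$ admits no unexpected curve. You are in fact slightly more explicit than the paper in justifying why the hypothesis of Proposition~\ref{pro: Fermat or die} (no $k$-rich lines for $k\geq 4$) is available, by citing \cite[Theorem 1.2]{CHMN} to exclude $k\geq 5$.
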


\begin{remark}\label{rmk:fermat}
It is interesting to point out that if $n\geq 5$ then the configuration $F_n$ admits unexpected curves of degrees $n+2,\ldots,2n-3$ by \cite[Proposition 6.12]{CHMN}.
\end{remark}

As a consequence, we can now complete the proof of our main result.

\begin{proof}[Proof of Theorem \ref{thm: Main result}]
	Thanks to Lemma \ref{lem:halfway done}, we know that the thesis holds if $|Z|=9$. If $|Z|<9$, then the unexpected curve is reducible by \cite[Corollary 5.5]{CHMN}. By \cite[Theorem 5.9]{CHMN}, this implies that some subset of $Z$ admits an unexpected cubic, and this contradicts Theorem \ref{thm:nounexpectedcubics}.
	
	Assume now that $|Z|>9$. Let $W$ be any subset of nine points of $Z$. Observe $I(W+3P)_4$ contains $I(Z+3P)_4$ for every $P\in\p^2$, hence $\dim I(W+3P)_4\ge\dim I(Z+3P)_4$. The latter equals 1 by \cite[Corollary 5.5]{CHMN}. Since $I(W+3P)_4$ is expected to be empty, $W$ admits an unexpected quartic too, so $\dim I(W+3P)_4=1$ for the same reason. It follows that $I(V+3P)_4=I(Z+3P)_4$ for every $V\subset Z$ such that $9\le |V|\le |Z|$. In particular, we consider a set of 10 points. This $V$ enjoys a peculiar property: if we remove any point from it, we get a subset $W$ admitting an unexpected quartic. By Lemma \ref{lem:halfway done}, this means that every time we remove a point from $V$, we get a configuration equivalent to Example \ref{ex:UnexpectedQuartic}. Such configuration has three 4-rich lines. It order to preserve this property, if we remove $Z_9$ (see Figure \ref{fig: Brian's config}), the tenth point of $V$ should lie in the intersection of two 3-rich lines, and this is not possible.
\end{proof}

We conclude by pointing out that there is a connection between existence and uniqueness of unexpected curves and de Jonqui\`{e}res transformations.
\begin{example}
Let $P$ be a general point and let $Z=\{Z_1, \dots,Z_9\}\subset\P^2$ be a set of nine points, not containing five collinear points. Let $\f$ be the degree four de Jonqui\`{e}res transformation with centers $P$ and $Z_1,\dots,Z_6$. In other words, $\f:\p^2\dashrightarrow\p^2$ is the birational map associated to the linear system of quartic plane curves containing $Z_1,\dots,Z_6$ and having multiplicity three at $P$. Let \begin{displaymath}
\xymatrix{
	X\ar[d] \ar[dr]^{\Phi} &\\
	\p^2 \ar@{-->}[r]_{\f}  & \p^2
}
\end{displaymath} be the resolution of indeterminacy of $\f$. If $Z$ admits an unexpected quartic $D$, then $\Phi(D)$ has degree $4\cdot 4-3^2-6=1$. Therefore the points $\Phi(Z_7)$, $\Phi(Z_8)$, $\Phi(Z_9)$ are collinear.
\end{example}
This phenomenon occurs every time that the linear system associated to such a de Jonqui\`{e}res-type transformation has no fixed components. However, this is not always the case.
\begin{example} Consider the Fermat configuration $F_{60}$ and a general point $P\in\p^2$. By Remark \ref{rmk:fermat}, there is an unexpected curve $C\in I(F_{60}+21P)_{22}$. Such $C$ is irreducible by \cite[Lemma 5.1]{CHMN}.  By B\'ezout's theorem, $C$ is an irreducible component of $I(F_{60}+30P)_{31}$, so $I(F_{60}+30P)_{31}\cong I(9P)_{9}$ has dimension ten. In this case the rational map $\f$ associated to the linear system of degree 31 curves containing $F_{60}$ and having multiplicity 30 at $P$ is not a birational transformation of $\p^2$, but rather is a rational map $\f:\p^2\dashrightarrow\p^9$.
\end{example}
\section*{Acknowledgements}
It is our great pleasure to thank the Organizers of PRAGMATIC 2017 in Catania: Elena Guardo, Alfio Ragusa, Francesco Russo and Giuseppe Zappal{\`a} for the stimulating atmosphere of the school and for the support.  We warmly thank Brian Harbourne and Adam Van Tuyl for helpful remarks. 
We thank the Organizers of IPPI 2018 in Torino: Enrico Carlini, Gianfranco  Casnati, Elena Guardo, Alessandro Oneto and Alfio Ragusa  for the invitation and the opportunity to continue our work on the paper and for the support.
We also thank Michela Di Marca, Grzegorz Malara and Alessandro Oneto for discussions and for sharing their ideas with us.
We thank the knowledgeable referees for many helpful remarks which improved
our paper.

\end{document}